\documentclass[10pt]{article}
\usepackage{graphics}
\usepackage{graphicx}

\usepackage[a4paper, left=35mm,right=35mm,top=34mm,bottom=34mm]{geometry}
\usepackage[utf8]{inputenc}
\usepackage[T1]{fontenc}
\usepackage[english]{babel}

\usepackage{enumerate}
\usepackage{graphicx}
\usepackage{hyperref}
\hypersetup{
    colorlinks=true,
    linkcolor=blue,
    filecolor=magenta,      
    urlcolor=cyan,
}
\usepackage{lipsum}
\usepackage{amsfonts}
\usepackage{graphicx}
\usepackage{epstopdf}
\usepackage{algorithmic}
\usepackage{lipsum}
\usepackage{amsfonts}
\usepackage{graphicx}
\usepackage{epstopdf}

\usepackage{hyperref}
\ifpdf
  \DeclareGraphicsExtensions{.eps,.pdf,.png,.jpg}
\else
  \DeclareGraphicsExtensions{.eps}
\fi

\usepackage{xcolor}

\usepackage{tikz}

\usepackage{marginnote,cite,amsmath,bbm}

 \usepackage{dsfont}
 \usepackage{psfrag}
 \usepackage{color}
 \usepackage{tikz}
 \usepackage{subfigure}
\usepackage{mathtools,amsthm,amssymb,amsfonts}
\usepackage{algorithm}
\usepackage{algorithmic}
\usepackage{tikz}
\usepackage{pgfplots}
\usepackage{siunitx}
\usepackage{psfrag}
\usepackage{color}
\usepackage{amssymb}
\usepackage{csquotes}
\usepackage{booktabs}
\usepackage[rightcaption]{sidecap}
\usepackage[colorinlistoftodos]{todonotes}
\usepackage{float}
\usepackage{libertine}
\usepackage{xurl}
\usepackage{csquotes}
\usepackage{graphics}
\usepackage{graphicx}
\def\R{\mathbb{R}}

\def\N{\mathbb{N}}
\def\C{\mathbb{C}}
\graphicspath{{./convergence exp}{./convergence resonnance}{./optimization}{./comparaison methode}{./comparaison loop projection}{./comparaison gen}}

\def\del  {\partial}
\def\eps{\varepsilon}

\def\R{\mathbb{R}}

\def\N{\mathbb{N}}

 \def\dx{{\rm d}x}

\def\Tr{\operatorname{Tr}}

\newtheorem{remark}{\textbf{Remark}}



\makeatother
\theoremstyle{plain}
\newtheorem{theorem}{\textbf{Theorem}}
\newtheorem{lemma}{\textbf{Lemma}}
\newtheorem{definition}{\textbf{Definition}}


\usepackage{caption} 
\captionsetup[table]{skip=5pt}

\usepackage{fancyhdr}

\lfoot{\small\textit{Preprint}}
\cfoot{}
\rfoot{\small\textit{September 6, 2024}}

\author{
  {\normalsize Frédéric Magoulès, Mathieu Menoux, Anna Rozanova-Pierrat}\thanks{CentraleSup\'elec, Universit\'e Paris-Saclay, France.}
    		}
\title{Frequency range non-Lipschitz parametric optimization of a noise absorption}
\date{}

\begin{document}
\maketitle
\thispagestyle{fancy}

\begin{abstract}
\noindent In the framework of the optimal wave energy absorption, we solve theoretically and numerically a parametric shape optimization problem to find the optimal distribution of absorbing material in the reflexive one defined by a characteristic function in the Robin-type boundary condition associated with the Helmholtz equation. Robin boundary condition can be given on a part or the all boundary of a bounded $(\eps,\infty)$-domain of $\R^n$. The geometry of the partially absorbing boundary is fixed, but allowed to be non-Lipschitz, for example, fractal. It is defined as the support of a $d$-upper regular measure with $d\in ]n-2,n[$. Using the well-posedness properties of the model, for any fixed volume fraction of the absorbing material, we establish the existence of at least one optimal distribution minimizing the acoustical energy on a fixed frequency range of the relaxation problem. Thanks to the shape derivative of the energy functional, also existing for non-Lipschitz boundaries, we implement (in the two-dimensional case) the gradient descent method and find the optimal distribution with $50\%$ of the absorbent material on a frequency range with better performances than the $100\%$ absorbent boundary. The same type of performance is also obtained by the genetic method. 
\end{abstract}

\begin{keywords}
 $d$-upper regular measure; wave propagation; parametric shape optimization; Helmholtz equation; sound absorption; Robin boundary condition.
\end{keywords}

\section{Introduction}

We study the parametric type optimization in the framework of boundary absorption of the total acoustical energy of the Helmholtz system in the case of a large class of boundaries, possibly irregular, non-rectifiable. The shape of the boundary is fixed, but it can be non-Lipschitz, such as fractal or multi-fractal. The optimization parameter is the characteristic function $\chi$ in the Robin boundary condition, defining the presence or absence of the boundary absorption (see~\eqref{EqHelmChi} and~\eqref{EqChi}). In this context (see Definition~\ref{DefPOP}), we show that the boundary irregularity, once the initial direct problem is weakly well-posed, does not affect the solution of the control problem and, in particular, the shape differentiability and continuity of the corresponding energy and of the solution of the direct problem. We solve it in the general class of domains, which we call here
\textit{Sobolev admissible domains}, consisting in bounded domains (open and connected sets) $\Omega\subset \R^n$, $n\ge 2$, which are 
\begin{enumerate}
	 \item[(a)] $(\eps,\infty)$-domains for a fixed $\eps>0$ (thus, $H^1$-extension domains);
	\item[(b)] its boundary $\del \Omega$ is the support of finite positive Borel $d$-upper regular measure for a fixed real number $d\in]n-2,n[$, \textit{i.e.} there exists $c_d>0$ such that:
\begin{equation}\label{Eqmu}
    \operatorname{supp}\mu=\del \Omega,\quad 
    \forall x\in\del \Omega,\ \forall r\in]0,1],\quad\mu(B_r(x))\le c_dr^d, 
\end{equation}
where $B_r(x)$ is an open ball of $\R^n$ centrerd at $x$ of radius $r>0$. 
\end{enumerate}
By $H^1$-extension domains, we understand the existence of a bounded linear extension operator $E_\Omega : H^1 (\Omega) \to H^1 (\R^n )$, \cite{HAJLASZ-2008,JONES-1981,ROGERS-2006}.
The $H^1$-extension domains are necessarily $n$-sets\cite{HAJLASZ-2008}, satisfying the measure density condition~\cite{HAJLASZ-2008}
$$\exists c>0 \quad \forall x\in \overline{\Omega}\quad  \lambda(B_r(x)\cap \Omega)\ge C  \lambda(B_r(x))=c r^n,$$
where $\lambda$ is the Lebesgue measure. In other words, the extension domains do not have a collapsing or infinitely fine boundary (such as cusps or fractal trees), so it is not possible to include a nontrivial open ball.

For the uniform geometrical properties of such domains, valuable for the uniform \\boundness of the extension operators and then for the uniform control of the direct problem solution, we work only with the particular case of $H^1$-extension domains:   $(\eps,\infty)$-domains for a fixed $\eps>0$. 
We refer to~\cite{JONES-1981,WALLIN-1991} for the definition of $(\eps,\infty)$-domain:
\begin{definition}\label{DefEDD}
	Let $\varepsilon > 0$. A bounded domain $\Omega\subset \mathbb{R}^n$ is called an \emph{$(\varepsilon,\infty)$-domain} if for all $x, y \in \Omega$ there is a rectifiable curve $\gamma\subset \Omega$ with length $\ell(\gamma)$ joining $x$ to $y$ and satisfying
	\begin{enumerate}
		\item[(i)] $\ell(\gamma)\le \frac{|x-y|}{\varepsilon}$ and
		\item[(ii)] $d(z,\partial \Omega)\ge \varepsilon |x-z|\frac{|y-z|}{|x-y|}$ for $z\in \gamma$.
	\end{enumerate}
\end{definition}
All bounded $(\eps,\infty)$-domains are $H^1$-extension domains of $\R^n$, $n\ge 3$, and they provide the optimal class of $H^1$-extension domains~\cite{JONES-1981} in $\R^2$, which are also the NTA-domains~\cite{NYSTROM-1996}. Thanks to~\cite[Theorem~1]{JONES-1981}, 
there is an extension operator $E_\Omega$ whose operator norm depends only on $n$ and $\eps$, see also~\cite{AZZAM-2017,ROGERS-2006}. As the definition of a Sobolev admissible domain depends on the domain $\Omega$ and the boundary measure $\mu$, we write the pair $(\Omega,\mu)$ to denote it. Condition~\eqref{Eqmu} implies that the Hausdorff dimension of the boundary is non-inferior to $d$. Thus, for Sobolev admissible domains, it allows having a boundary of variable dimension, for example, a boundary with a part defined by a $d_1$-set~\cite{JONSSON-1984}, a part with a Lipschitz boundary, and another part given by a $d_2$-set, with $d\le d_1<d_2< n$ and $n-2<d< n$.

The practical interest of the considered optimization problem is the concept of the anechoic chamber and the absorbing anti-noise walls with a fixed (small) quantity of the absorbing (porous) material, included in a reflective one, providing better (or at least the same) acoustical energy absorption performances as in the case of fully absorbing shapes (with the chosen porous material). It means that for a fixed boundary shape (and the boundary measure $\mu$),  we search the optimal distribution of the porous material inclusions (of a total small fixed volume) in the reflective one, able to minimize the total acoustical energy of the system on the frequency range of interest. For different applications and different source noises, the frequency range is different.
Globally, this question, \textit{``is it possible to have an optimal distribution of a small fixed quantity of porous material for at least the same energy absorption performances?'',} is a typical engineering problem since porous media are generally much more expensive and more complicated to produce than reflective ones. 
To our knowledge, this question has never been solved previously. 
However, the theoretical and numerical parametric shape optimization with different application goals is generally a very common subject as presented in~\cite{ALLAIRE-2007,ALLAIRE-1997,BANICHUK-1990,HASLINGER-2003,HENROT-2005,PIRONNEAU-1984}  and their references. For the geometrical shape optimization ($i.e.$ for the optimization of the boundary shape itself) for models with Robin-type conditions, we mainly refer to~\cite{BUCUR-2005,BUCUR-2016,HINZ-2021-1,HINZ-2023,MAGOULES-2021}.

In this article, we start with the question of the existence of at least one optimal distribution (see Definition~\ref{DefPOP}) for two typical cases: for a fixed frequency and then for a fixed range of frequencies. To conclude, we apply the usual relaxation method and the continuity energy property. Our main result is given in Theorem~\ref{ThMainROptParam}. Each time, we refer to the well-posedness of the direct problem presented in Section~\ref{SecWP}.
We denote by $\Gamma$ the Robin part of the boundary, supposed to be all times not trivial part of $\del \Omega$, and by $\Gamma_D$ the Dirichlet part. 
We distinguish two cases:  $\del \Omega=\Gamma$ (or $\mu(\Gamma)=\mu(\del \Omega)$, the typical case of anechoic chambers), and the case when $\Gamma_D$ is also a not trivial ($\mu(\Gamma_D)>0$) part of $\del \Omega$. 
The inclusion $\Gamma_D$ with the homogeneous Dirichlet condition previously was crucial for the use of the Poincaré inequality with a uniform on the shape boundary constant, only depending on its volume~\cite{MAGOULES-2021,HINZ-2021,HINZ-2021-1}. Thanks to~\cite[Theorem~3.1]{HINZ-2023}, its generalization was obtained for the case $\del \Omega=\Gamma$ for Sobolev admissible domains. 
Thus, the results of~\cite{HINZ-2023} make it possible to solve the direct and the parametric shape optimization problem for the Helmholtz equation, also in the pure Robin boundary case.

In Theorem~\ref{ThMainROptParam}, we prove the existence of an optimal distribution minimizing the energy of the relaxation problem (posed in the class of the shape admissible domains~\eqref{EqUad*}) globally (see~\eqref{EqExistFreqMin}), on a range of frequencies, and locally (see~\eqref{EqExistMin}), for a fixed frequency.
 In addition, we find the shape derivative of the energy, understood as usual in the Frechet sense, without any additional assumption on the regularity of the boundary, described by a $d$-upper regular measure. The value of the derivative obviously depends on the chosen boundary measure $\mu$ (see~\eqref{EqJder} and~\eqref{EqDerJhat}).

In Section~\ref{SecNumR}, we finish by giving a numerical example when the answer to the posed previously engineer question is positive with $50\%$ of reduction of the porous material (see Fig.~\ref{fig:comparaison_gen} which shows the analogous energy performances obtained by two different numerical methods: the gradient descent method, explained in Subsection~\ref{ss:algo_gradient}, and the genetic algorithm based on the covariance matrix adaptation evolution strategy~\cite{AHAMED-2013}).

The rest of the paper is organized in the following way: in Section~\ref{SecWP}, we introduce the model, the main notations and formulate in Theorem~\ref{ThWPchi} the well-posedness result known from~\cite{HINZ-2021-1}; in Section~\ref{SecParam} we introduce the parametric optimization problem and prove the main result stated in Theorem~\ref{ThMainROptParam}, firstly by proving the existence of optimal shapes in Subsection~\ref{SubsExistenceShape} and then by finding the shape derivatives of the energy functional by the usual Lagrangian method in Section~\ref{SecLagrange}; in Section~\ref{SecNumR} we present the numerical results found for the optimization on a fixed frequency (see Subsection~\ref{SubSecOptNumSF}) and for the optimization on a fixed frequency range (see Subsection~\ref{SubSecOptNumRF}), starting by describing the used gradient descent method in Subsection~\ref{ss:algo_gradient}.

\section{Model and its well-posedness properties}\label{SecWP}
We study the same frequency model as in~\cite{MAGOULES-2021}. We assume that $\Omega\subset \R^n$ and $\mu$ is the boundary measure such that $(\Omega,\mu)$ is a (bounded) Sobolev admissible domain, hence, with a compact boundary $\del \Omega$. To describe the energy absorption of a wave, we model it by the complex-valued Robin-type boundary condition with a coefficient $\alpha(f)$, a continuous function of the frequency $f$ (supposed to be real). In~\cite{MAGOULES-2021} it was mentionned that for the convection $\hat{w}(x,t)=e^{-i k t}w(x)$ with $k>0$, the absorbing properties of the porous media are modeled by $\operatorname{Im}(\alpha)<0$ and the reflective ones by $\operatorname{Re}(\alpha)>0$ (for the time-depending model see~\cite{BARDOS-1994}). Here,  we adopt these assumptions and consider the following mixed boundary-value problem for a fixed (real) wave number $k:=k(f)=\frac{2\pi f}{c}$ with a constant speed of the wave propagation in the air $c>0$: 
\begin{equation}\label{EqHelmChi}
  \left\{
    \begin{array}{ll}
        (\Delta+k^2) u=F  \mbox{ on }  \Omega; \\
        \frac{\partial u}{\partial n} = 0  \mbox{ on }    \Gamma_{Neu};\quad
         u = 0 \mbox{ on } \Gamma_{Dir};\quad
        \frac{\partial u}{\partial n}+\alpha(k)\chi u = \eta \mbox{ on } \Gamma,
    \end{array}
\right.  
\end{equation}
where  $\chi$ is the characteristic function of  the porous material inclusions on $x\in \Gamma$:
\begin{equation}\label{EqChi}
		\chi(x)=\left\{ \begin{array}{l}
	                                          	1, \hbox{ there is a porous material in  } x\\
	                                          	0, \hbox{ no porous material in } x.
	                                          \end{array}\right.
 \end{equation}
 To avoid degenerate cases,  we suppose that each part of $\Gamma$, porous and no porous, has positive capacity with respect to the space $H^1(\mathbb{R}^n)$ (see for instance~\cite[Section 7.2]{MAZ'JA-1985}) and has a strictly positive value of the measure $\mu$. Up to a zero $\mu$-measure set, the part of $\Gamma$ filled with the porous material can be considered its compact subset.  
 The partition of the boundary $\del \Omega= \Gamma_{Neu} \cup \Gamma_{Dir}\cup \Gamma$ is done with the same strategy as in~\cite{MAGOULES-2021} - the Dirichlet part for the noise source projected on the boundary, the Neumann part for the only reflexive parts, Robin condition for the part of the boundary consisting on two media, absorbing and non-absorbing one, - and the same meaning as in~\cite{HINZ-2021}:
 \begin{equation}\label{EqDivisionDelOmega}
 	\mu(\Gamma_{Neu}\cap \Gamma_{Dir})=\mu (\Gamma_{Neu}\cap \Gamma)=\mu(\Gamma_{Dir}\cap \Gamma)=0,
 \end{equation}
 $\Gamma_{Dir}$ and $\Gamma$ are closed subsets of $\del \Omega$. The assumptions that $\Gamma_{Dir}$, $\Gamma$ and its porous part are closed in the induced topology on $\del \Omega$ ensure that the linear trace operators $Tr_{\Gamma_{Dir}}: H^1(\Omega)\to L^2(\Gamma_{Dir},\mu)$ and $Tr_{\Gamma}: H^1(\Omega)\to L^2(\Gamma,\mu)$ 
 are compact (for their definitions see~\cite{HINZ-2021,HINZ-2021-1,HINZ-2023} initially adopted from~\cite[Corollaries 7.3 and 7.4]{BIEGERT-2009} and based on the restriction of  quasi-continuous representatives of $H^1(\R^n)$-elements).  
 
 Here, the space $L^2(\Gamma,\mu)$ means the space of measurable functions on $\Gamma$ such that $\|h\|_{L^2(\Gamma,\mu)}=\sqrt{\int_\Gamma |h|^2d \mu}$ is finite.
 
 The basic properties of the trace operator are presented in~\cite[Corollary~5.2]{HINZ-2021}. For more properties, see also the incoming work~\cite{CLARET-2023}.
 \begin{theorem}
 	Let $(\Omega,\mu)$ be a Sobolev admissible domain of $\R^n$, $n\ge2$.
Then the image of the trace operator $B(\del \Omega):=\operatorname{Tr}(H^1(\Omega))$ endowed with the norm
\begin{equation}\label{normTri}
\left\|h\right\|_{B(\del \Omega)}:=\min\{ \left\|v\right\|_{H^1(\Omega)}|\ h=\mathrm{Tr}\:v\},
\end{equation}
is a Hilbert space, dense and compact in $L^2(\del \Omega,\mu)$.
 \end{theorem}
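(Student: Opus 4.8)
The plan is to identify $B(\del\Omega)$ with a quotient Hilbert space and then read off the three assertions. Write $T:=\operatorname{Tr}:H^1(\Omega)\to L^2(\del\Omega,\mu)$ for the trace operator, which is bounded and, by the trace theory of the paper's framework, compact, and set $N:=\ker T$. For the \emph{Hilbert space structure}: since $T$ is bounded, $N$ is a closed subspace of the Hilbert space $H^1(\Omega)$, so $H^1(\Omega)=N\oplus N^\perp$. For $h\in B(\del\Omega)$ the preimage $T^{-1}(\{h\})$ is a closed affine subspace $v_0+N$; by the projection theorem it contains a unique element $v_h$ of minimal $H^1(\Omega)$-norm, namely the one lying in $N^\perp$ (for any $w=v_h+n$ with $n\in N$, Pythagoras gives $\norm{w}^2=\norm{v_h}^2+\norm{n}^2$). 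Hence the minimum defining $\norm{h}_{B(\del\Omega)}$ is attained and equals $\norm{v_h}_{H^1(\Omega)}$. The assignment $h\mapsto v_h$ is linear and is an isometric isomorphism of $B(\del\Omega)$ onto $N^\perp$; transporting the $H^1(\Omega)$-inner product through it, $\langle h_1,h_2\rangle_{B(\del\Omega)}:=\langle v_{h_1},v_{h_2}\rangle_{H^1(\Omega)}$, yields an inner product inducing $\norm{\cdot}_{B(\del\Omega)}$, and completeness is inherited from the closed subspace $N^\perp$. Thus $B(\del\Omega)$ is a Hilbert space.

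For \emph{density in $L^2(\del\Omega,\mu)$}, I would first note that restrictions to $\del\Omega$ of polynomials (more generally of $C^\infty(\R^n)$ or Lipschitz functions) belong to $B(\del\Omega)$: such a $\phi$ satisfies $\phi|_\Omega\in H^1(\Omega)$ because $\Omega$ is bounded, and being continuous it is its own quasi-continuous representative, so $T(\phi|_\Omega)=\phi|_{\del\Omega}$. The resulting subalgebra $\mathcal A\subset C(\del\Omega)$ contains the constants and separates points of the compact set $\del\Omega$, hence is uniformly dense in $C(\del\Omega)$ by Stone--Weierstrass. Since $\mu$ is a finite Borel measure on the compact metric space $\del\Omega$, continuous functions are dense in $L^2(\del\Omega,\mu)$, and uniform convergence implies $L^2(\mu)$-convergence because $\mu(\del\Omega)<\infty$. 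Chaining these inclusions shows that $\mathcal A\subset B(\del\Omega)$ is dense in $L^2(\del\Omega,\mu)$.

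For \emph{compactness of the embedding}, let $\iota:B(\del\Omega)\hookrightarrow L^2(\del\Omega,\mu)$ be the canonical inclusion and $P:B(\del\Omega)\to N^\perp$, $Ph=v_h$, the isometry above. Since $T(Ph)=Tv_h=h$, we have the factorization $\iota=T\circ P$ with $P$ bounded and $T$ compact, so $\iota$ is compact. Equivalently, a bounded sequence $(h_n)$ in $B(\del\Omega)$ lifts to the bounded sequence $(v_{h_n})$ in $H^1(\Omega)$, and $(h_n)=(Tv_{h_n})$ admits a convergent subsequence in $L^2(\del\Omega,\mu)$ by compactness of $T$.

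The \emph{main obstacle} is not in any of these three deductions: once $T$ is known to be bounded and compact, the Hilbert and compactness statements are pure functional analysis, and density reduces to Stone--Weierstrass together with density of $C(\del\Omega)$ in $L^2(\mu)$. The real content on which everything rests is the compactness of $T:H^1(\Omega)\to L^2(\del\Omega,\mu)$ for a Sobolev admissible domain; this is precisely where the hypotheses are used --- the $(\eps,\infty)$ (hence $H^1$-extension) property and the $d$-upper regularity \eqref{Eqmu} with $d\in\,]n-2,n[$ --- and it is the only delicate step. I would obtain it from the paper's trace framework (of which the compactness of $\operatorname{Tr}_{\Gamma_{Dir}}$ and $\operatorname{Tr}_{\Gamma}$ stated above is a part), deriving it, if it had to be proved here, from the uniformly bounded extension operator $E_\Omega$ composed with the compact embedding furnished by the $d$-upper regularity estimate.
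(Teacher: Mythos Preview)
Your proof is correct and essentially self-contained modulo the compactness of the trace operator, which you rightly isolate as the only substantive input. The Hilbert-space identification via $N^\perp$, the Stone--Weierstrass density argument, and the factorization $\iota=T\circ P$ are all clean and standard.

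The paper, however, does not prove this theorem: it is stated as a quoted result, with the sentence preceding it attributing the basic trace properties to \cite[Corollary~5.2]{HINZ-2021} (and referring forward to \cite{CLARET-2023}). So there is no ``paper's own proof'' to compare against; your argument supplies what the paper imports from the literature. The compactness of $\operatorname{Tr}:H^1(\Omega)\to L^2(\del\Omega,\mu)$ that you flag as the crux is likewise cited rather than proved in the paper (see the references to \cite{BIEGERT-2009}, \cite[Theorem~5.10]{HINZ-2021-1}, \cite[Theorem~2.1]{HINZ-2023} in Remark~\ref{RemOptT} and the surrounding discussion), and indeed this is where the $(\eps,\infty)$-extension property and the $d$-upper regularity with $d>n-2$ are genuinely used.

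One minor remark on your density step: you should be slightly careful that the trace, defined via quasi-continuous representatives of $H^1(\R^n)$-extensions, agrees with pointwise restriction for smooth $\phi$. You do address this, and it is unproblematic since a globally continuous function is its own quasi-continuous representative; but in the non-Lipschitz setting this is the one place where the definition of $\operatorname{Tr}$ actually matters for the argument.
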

We denote by $B'(\del \Omega)$ the topological dual space of $B(\del \Omega)$ and take in mind the usual Gelfand triple: $B (\del \Omega)\subset L^2(\del \Omega,\mu) \subset B'(\del \Omega)$.
 Therefore, the normal derivative on a boundary of a Sobolev admissible domain (possibly non-Lipschitz or fractal)
 is understood as the linear continuous functional on the image of the trace, defined by the usual Green formula:
 for all  $u\in H^1(\Omega)$ with $\Delta u\in L^2(\Omega)$ and for all $v\in H^1(\Omega)$
 \begin{equation}\label{EqGreenF}
 	\langle\frac{\del u}{\del n},\Tr v\rangle_{B'\!,\,B}= \int_\Omega{(\Delta u)v\,\dx}+\int_\Omega{\nabla u\cdot\nabla v\,\dx}.
 \end{equation}

 For the frequency model~\eqref{EqHelmChi}, we introduce the Hilbert space 
 \begin{equation}\label{EqVOM}
  V(\Omega)=\{u\in H^1(\Omega)|\; \operatorname{Tr}_{\Gamma_{Dir}} u=0 \}
 \end{equation}
with the norm (equivalent to the canonical norm $\|\cdot\|_{H^1(\Omega)}$ by the Poincaré inequality and the continuity of the trace operator)
\begin{equation}\label{EqVOmchi}
	\|u\|^2_{V(\Omega),\chi}=\int_\Omega |\nabla u|^2\dx+\int_\Gamma \mathrm{Re}(\alpha)\chi |u|^2 d\mu.
\end{equation}
If $\Gamma=\del \Omega$ then $V(\Omega)=H^1(\Omega)$ and  norm~\eqref{EqVOmchi} is still equivalent to the canonical norm of $H^1(\Omega)$ by~\cite[Corollary~3.2]{HINZ-2023} with the assumption of the lower and upper boundness of the coefficient $\mathrm{Re}(\alpha)\chi$ (see also~\cite[Corollary~5.15]{HINZ-2021-1}).

Norm~\eqref{EqVOmchi} is associated with the following inner product:
\begin{equation}\label{EqScPVchi}
\forall u,v\in V(\Omega) \quad	(u,v)_{V(\Omega),\chi}=\int_\Omega \nabla u \nabla \overline{v}\dx+\int_\Gamma \mathrm{Re}(\alpha)\chi \mathrm{Tr} u \mathrm{Tr}\overline{v} d \mu.
\end{equation}
The choice of $\chi$ from~\eqref{EqChi} comes from the physical meaning of our model. For the well-posedness, we do not need to assume that $\chi$ is a characteristic function, but  that $\chi\in L^\infty(\Gamma,\mu)$ is a nonnegative and bounded Borel function on $\Gamma$ which is positive with a positive minimum on a subset positive $\mu$-measure. 
For all such $\chi_1$, $\chi_2\in L^\infty(\Gamma,\mu)$ 
the norms $\|\cdot\|_{V(\Omega),\chi_1}$ and $\|\cdot\|_{V(\Omega),\chi_2}$ are equivalent on $V(\Omega)$. They are also equivalent by~\cite[Corollary 5.15]{HINZ-2021-1} to the canonical $H^1(\Omega)$- (thanks to the compactness of the trace operator) and  $H^1_0(\Omega)$- norms (as soon as $\mu(\Gamma_D)>0$). By the canonical $H^1_0(\Omega)$-norm we understand~\eqref{EqVOmchi} with   $\chi=0$ on $\Gamma$.

 In this case~\cite{HINZ-2021,MAGOULES-2021}, we search the weak solution $u\in V(\Omega)$ of system~\eqref{EqHelmChi}   in the following variational sense
\begin{multline}\label{EqVFchi}
	\forall \phi \in V(\Omega) \quad \int_\Omega \nabla u\nabla \overline{\phi} \dx -k^2 \int_\Omega u \overline{\phi} \dx+ \int_\Gamma \operatorname{Re}(\alpha) \chi \mathrm{Tr} u \mathrm{Tr}\overline{\phi}d \mu\\ +i  \int_\Gamma \operatorname{Im}(\alpha) \chi \mathrm{Tr} u \mathrm{Tr}\overline{\phi}d \mu =\int_\Gamma  \eta \mathrm{Tr}\overline{\phi}d \mu-\int_\Omega F \overline{\phi} \dx,
\end{multline}
for some fixed $F\in L^2(\Omega)$, $\chi\in L^\infty(\Gamma,\mu)$, $\eta\in L^2(\Gamma,\mu)$, $k>0$ and $\alpha\in \C$ with $\operatorname{Re}(\alpha)>0$, $\operatorname{Im}(\alpha)<0$ (constants on $\Gamma$, depending continuously on $k$). Here, the notation $\overline{\phi}$ means the complex conjugate of a complex-valued $\phi$.
Equivalently, the variational formulation~\eqref{EqVFchi} can be rewritten as
\begin{multline}
\forall \phi\in V(\Omega) \quad	(u,\phi)_{V(\Omega),\chi}-k^2(u,\phi)_{L^2(\Omega)}+i(\mathrm{Im}(\alpha) \chi \mathrm{Tr} u, \mathrm{Tr}\phi)_{L^2(\Gamma,\mu)}\\
=(\eta,\mathrm{Tr}\phi)_{L^2(\Gamma,\mu)}-(F,\phi)_{L^2(\Omega)}.
\end{multline}

We have the analogous well-posedness result for~\eqref{EqVFchi} to compare to~\cite[Theorem 2.1]{MAGOULES-2021}:
\begin{theorem}\label{ThWPchi}
	Let $\Omega\subset \R^n$ and $\mu$ be the boundary measure such that $(\Omega,\mu)$ is a  Sobolev admissible domain with a (compact) boundary $\del \Omega$. 
Assume $\del \Omega=\Gamma_{Dir}\cup\Gamma_{Neu}\cup \Gamma$ such that it holds~\eqref{EqDivisionDelOmega}, $\mu(\Gamma)>0$, $\Gamma\subseteq \del \Omega$ and $\Gamma_{Dir}$ and $\Gamma$ are also  compact with the same properties as $\del \Omega$ itself.
Let in addition $\mathrm{Re}(\alpha)>0$, $\mathrm{Im}(\alpha)<0$  on~$\Gamma$ ($\alpha$ is a continuous function or simply a constant) and  $\chi\in L^\infty(\Gamma,\mu)$ be a nonnegative and bounded Borel function on $\Gamma$ which is positive with a positive minimum on a subset positive $\mu$-measure. 

Then for all $F\in L^2(\Omega)$,  $\eta\in L^2(\Gamma,\mu)$, and $k>0$ (or equivalently, $f>0$) there exists 
a unique solution $u\in V(\Omega)$ of the Helmholtz problem~\eqref{EqHelmChi} 
in the following sense: for all $v\in V(\Omega)$ it holds~\eqref{EqVFchi}.

Moreover, the solution of problem~\eqref{EqVFchi} $u\in  V(\Omega)$, continuously depends on the data:
there exists a constant $\hat{C}>0$, depending only on $n$, $\eps$, $d$, $c_d$, $\lambda(\Omega)$, $\chi$, $\alpha$ and $k$,  
such that
 \begin{equation}\label{EqApriorichi}
  \|u\|_{V(\Omega),\chi}\le \hat{C}\left(\|F\|_{L_2(\Omega)}+\|\eta\|_{L^2(\Gamma,\mu)}\right).
 \end{equation}
\end{theorem}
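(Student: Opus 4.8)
The plan is to follow the variational strategy used for \cite[Theorem~2.1]{MAGOULES-2021}, adapting it to the presence of the coefficient $\chi$ and to the possibly pure Robin case. First I would introduce the sesquilinear form on $V(\Omega)$ given by the left-hand side of \eqref{EqVFchi},
\[
a(u,\phi)=(u,\phi)_{V(\Omega),\chi}-k^2(u,\phi)_{L^2(\Omega)}+i(\mathrm{Im}(\alpha)\chi\,\Tr u,\Tr\phi)_{L^2(\Gamma,\mu)},
\]
together with the bounded antilinear functional $\ell(\phi)=(\eta,\Tr\phi)_{L^2(\Gamma,\mu)}-(F,\phi)_{L^2(\Omega)}$. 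Continuity of $a$ and boundedness of $\ell$ follow from Cauchy--Schwarz combined with the continuity of the trace operator $\Tr_\Gamma:V(\Omega)\to L^2(\Gamma,\mu)$ and the boundedness of $\chi$. The relevant trace and Poincaré constants are uniform over Sobolev admissible domains and depend only on $n,\eps,d,c_d,\lambda(\Omega)$, which is ultimately what yields the stated dependence of $\hat C$.

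Next I would recast the problem as an operator equation amenable to the Fredholm alternative. By the Riesz representation theorem on the Hilbert space $(V(\Omega),(\cdot,\cdot)_{V(\Omega),\chi})$ there are bounded operators representing each term of $a$: the first term is the identity, while the remaining two define an operator $K$, so that $a(u,\phi)=((I+K)u,\phi)_{V(\Omega),\chi}$. Because $V(\Omega)\hookrightarrow L^2(\Omega)$ is compact (Rellich for the $(\eps,\infty)$-extension domain) and $\Tr_\Gamma:V(\Omega)\to L^2(\Gamma,\mu)$ is compact by the hypotheses recalled before \eqref{EqGreenF}, the operator $K$ is compact. Thus \eqref{EqVFchi} is equivalent to $(I+K)u=g$ with $g$ the Riesz representative of $\ell$, and by the Fredholm alternative existence for every data is equivalent to uniqueness for the homogeneous problem.

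The core of the argument is therefore uniqueness. Taking $\phi=u$ in \eqref{EqVFchi} with $F=0$, $\eta=0$ and reading off the imaginary part, all interior and real boundary contributions drop out and I obtain $\int_\Gamma \mathrm{Im}(\alpha)\chi|\Tr u|^2\,\dm=0$. Since $\mathrm{Im}(\alpha)<0$ and $\chi\ge 0$ is positive on a set of positive $\mu$-measure, this forces $\Tr u=0$ $\mu$-almost everywhere on the porous part $\{\chi>0\}$; feeding this back into the Robin condition shows that $u$ has simultaneously vanishing trace and vanishing normal derivative, in the weak sense of \eqref{EqGreenF}, on that part, while $(\Delta+k^2)u=0$ makes $u$ real-analytic in the interior. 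I expect this step---propagating the vanishing Cauchy data on the possibly non-Lipschitz, positive-capacity porous subset of $\Gamma$ into $u\equiv 0$ on all of $\Omega$---to be the main obstacle, since classical unique continuation from the boundary presupposes boundary regularity; I would resolve it exactly as in the analogous result of \cite{MAGOULES-2021}, using interior analyticity together with the positive-capacity assumption on the porous part.

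Finally, with uniqueness established the Fredholm alternative gives existence, and the bounded invertibility of $I+K$ yields the a priori bound. To obtain the explicit form of $\hat C$ in \eqref{EqApriorichi} I would argue by contradiction: a sequence of data with norms tending to zero but solutions of unit norm would, after passing to the weak limit and using the compactness of $K$, converge strongly to a nonzero solution of the homogeneous problem, contradicting uniqueness. The uniformity of $\hat C$ over the admissible class then follows from the uniform bounds on the extension, Poincaré and trace constants, depending only on $n,\eps,d,c_d,\lambda(\Omega)$, together with the continuity constants of $a$ carrying the dependence on $\chi,\alpha$ and $k$.
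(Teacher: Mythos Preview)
Your proposal is correct and follows essentially the same route as the paper: the authors likewise reduce~\eqref{EqVFchi} to a Fredholm equation (they write it as $(Id-k^2T)u=g$ on $(V(\Omega),(\cdot,\cdot)_{V(\Omega),\chi})$, with $T$ compact by the compact trace and compact embedding $H^1(\Omega)\hookrightarrow L^2(\Omega)$), obtain injectivity from the uniqueness of the homogeneous Cauchy problem for $-\Delta+k^2$ exactly via the imaginary-part argument you describe, and deduce the a~priori bound from bounded invertibility. The one point the paper singles out that you only mention in passing is the norm equivalence underpinning the whole construction: in the pure Robin case $\Gamma=\partial\Omega$ one must invoke the generalized Poincar\'e inequality of \cite[Theorem~3.1]{HINZ-2023} (using that $\chi$ is bounded below on a set of positive $\mu$-measure) to get $\|\cdot\|_{V(\Omega),\chi}\simeq\|\cdot\|_{H^1(\Omega)}$ with constants depending only on $n,\eps,d,c_d$, whereas for $\mu(\Gamma_{Dir})>0$ the ordinary Poincar\'e inequality \cite[Theorem~10]{DEKKERS-2022} suffices.
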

The proof of Theorem~\ref{ThWPchi} is completely analogous to the proof of~\cite[Theorem 2.1]{MAGOULES-2021} and~\cite[Theorem 7.1]{HINZ-2021} (see Appendix B in~\cite{ARXIV-HINZ-2020}) by applying the Fredholm alternative and the uniqueness of the homogeneous Cauchy problem for $-\Delta+k^2$ (which ensures the injective property). The important remark is the application of the Poincaré inequality with the uniform constant $C_P>0$ depending only on the fixed parameters $n$, $\eps$, $d$, $c_d$ in the case of $\Gamma=\del \Omega$ following~\cite[Theorem~3.1.]{HINZ-2023}. In the case $\mu(\Gamma_D)\ne 0$ we apply~\cite[Theorem~10]{DEKKERS-2022}. 
\begin{remark}\label{RemOptT}
In what follows, we use this established Fredholm property of problem~\eqref{EqVFchi}, especially for $F=0$.
More precisely, thanks to Appendix B in~\cite{ARXIV-HINZ-2020} and also to~\cite[Theorem~2.1]{MAGOULES-2021}, the variational formulation~\eqref{EqVFchi} for $F=0$ can be presented
in the following operator form:
\begin{equation}\label{EqOpT}
((Id-k^2T)u,\phi)_{V(\Omega),\chi}=(\eta, \mathrm{Tr}\phi)_{L^2(\Gamma,\mu)},	
\end{equation}
where $Id-k^2T$ is bijective operator on $V(\Omega)$ with $Id$ the identity operator and $T$ a compact operator. The compactness of $T$ follows from the compactness of the trace operator (see~\cite{BIEGERT-2009},~\cite[Theorem~5.10]{HINZ-2021-1},~\cite[Theorem~2.1]{HINZ-2023}) and the embedding $H^1(\Omega)\subset L^2(\Omega)$ (the compactness holds for any bounded extension domain~\cite{ARFI-2019,ROZANOVA-PIERRAT-2021,HINZ-2021-1}). As it was also mentioned in~\cite{BARDOS-1994}, the wave number $k$ is real and thus not in the spectrum of $-\Delta$ associated with the absorption Robin boundary condition with $\operatorname{Im}\alpha \ne 0$ on a non-trivial part of the boundary.
\end{remark}

\section{Parametric optimization problem on the support of a $d$-upper regular measure}\label{SecParam}
Once we have the well-posedness of our model, let us formulate the optimization problem.
We fix the volume fraction $\beta$ of the absorbing material of the wall (a number between $0$ and $1$, in the assumption that $\mu(\Gamma)=1$): 
\begin{equation}\label{Eqbeta}
	0<\int_\Gamma \chi d\mu =\beta <\mu(\Gamma)=\int_\Gamma 1 d \mu=1.
\end{equation}
It is the percentage rate of the absorbent material on $\Gamma$.
The two limit cases $\beta=0$ and $\beta=1$ are excluded.
Therefore, we define the space of admissible distributions of the porous material $\chi$:
\begin{multline}\label{EqUadChi}
	U_{ad}(\beta)=\{\chi\in L^\infty(\Gamma,\mu)|\; \mu\hbox{-a.e. on } \Gamma \quad \chi(x)\in\{0,1\}, \quad 0<\beta= \int_\Gamma \chi d\mu< 1\}.
\end{multline}
The set $U_{ad}(\beta)$ is thus a subset of $L^\infty(\Gamma,\mu)$ consisting of all functions taking only two values $0$ or $1$ on $\Gamma$ and define a fixed value of $\beta$ in $]0,1[$. Let $I$ be the frequency interval of interest (for instance, the audible frequencies).
We want to minimize the total acoustical energy of problem~\eqref{EqHelmChi} first for a fixed frequency $f\in I$ and then for the all frequency interval $I$. For a fixed frequency $f\in I$, the acoustical energy is modeled by the following functional $J (f,\chi):I\times U_{ad}(\beta)\to \R$:
\begin{equation}\label{EqJ}
	 J(f,\chi)=A\int_\Omega |u(f,\chi)|^2\dx+B\int_\Omega |\nabla u(f,\chi)|^2 \dx+C\int_\Gamma |\operatorname{Tr}u(f,\chi)|^2 d\mu
\end{equation}
with positive constants $A\ge 0$, $B\ge 0$ and $C\ge0$, $A^2+B^2>0$. If $A\ge0$, and  $B$ with $C$ are strictly positive, the expresion of $J$ defines an equivalent norm on $H^1(\Omega)$, and hence, on $V(\Omega)$. For the numerical tests in Section~\ref{SecNumR}, we take $A=1$ and $B=C=0$. Hence, the right-hand side of~\eqref{EqJ} presents the $L^2$-norm of the weak solution of~\eqref{EqHelmChi}. The changes of the distribution $\chi$, keeping fixed all other data and parameters of system~\eqref{EqHelmChi}, imply the changes of the weak solution $u$, and thus also vary the value of $J$. Therefore, in~\eqref{EqJ} we consider $u$ as a function of $\chi$ and $J$ depending on $\chi$ by $u$. In the same way, the dependence of $u$ and $J$ on the frequency value $f$ is also obvious.
Therefore, our final aim is to minimize the ``total'' energy on $U_{ad}(\beta)$:
\begin{equation}\label{EqTotalJ}
	\hat{J}(\chi):=\int_I J(f,\chi) d f, \quad \min_{\chi\in U_{ad}(\beta)}\hat{J}(\chi).
\end{equation}

Thus, we formulate two optimization problems:
\begin{definition}\label{DefPOP}\textbf{(Parametric optimization problems)}
	For a fixed  Sobolev admissible domain $(\Omega,\mu)$, fixed $\beta>0$, and the source of the noise $(F,\eta)$ and the chosen porous material described by $\alpha(f)$, a known function of the frequency $f$ with $\mathrm{Re}(\alpha)>0$, $\mathrm{Im}(\alpha)<0$  on~$\Gamma$,
	\begin{enumerate}
		\item \textbf{for a fixed frequecy} $f>0$, to find $\chi_{opt}\in U_{ad}(\beta)$ for which there exists the (unique) solution $u(f,\chi_{opt})\in V(\Omega)$ of the Helmholtz problem~\eqref{EqHelmChi} considered with $\chi=\chi_{opt}$, such that
$$J(f,\chi_{opt})=\min_{\chi\in U_{ad}(\beta)}J(f,\chi).$$
\item \textbf{for a bounded range of frequencies} $I$, to find $\chi_{opt}\in U_{ad}(\beta)$ for which there exists for all $f\in I$ the (unique) solution $u(f,\chi_{opt})\in V(\Omega)$ of the Helmholtz problem~\eqref{EqHelmChi} considered with $\chi=\chi_{opt}$, such that
$$\hat{J}(\chi_{opt})=\min_{\chi\in U_{ad}(\beta)}\int_I J(f,\chi)d f.$$
	\end{enumerate}
\end{definition}

The main problem is that the set of the admissible shapes $U_{ad}(\beta)$ is not closed for the weak$^*$ convergence of $L^\infty(\Gamma,\mu)$~\cite{HENROT-2005}:
if a sequence of characteristic functions $(\chi_n)_{n\in \N}$ converges weakly$^*$ in $L^\infty(\Gamma,\mu)$ to a function $h\in L^\infty(\Gamma,\mu)$, it does not follows  that the weak$^*$ limit function $h$ is a characteristic function, $i.e.$ takes only two values $0$ and $1$. 

Thus $U_{ad}(\beta)$ is not a weak$^*$ compact. Consequently, the parametric shape optimization problem defined in Definition~\ref{DefPOP} cannot be generally solved on $U_{ad}(\beta)$.

Following the standard relaxation approach~\cite[p.277]{HENROT-2005}, instead of solving the optimization problem on $U_{ad}(\beta)$ we will solve it on its (convex) closure (for the ideas of the proof see~\cite[Proposition 7.2.14]{HENROT-2005}):
\begin{equation}\label{EqUad*}
	U_{ad}^*(\beta)=\{\chi\in L^\infty(\Gamma,\mu)|\; \mu\hbox{-a.e. on }\Gamma \quad \chi(x)\in [0,1], \quad 0<\int_{\Gamma}\chi d \mu =\beta < \mu (\Gamma)=1\}. 
\end{equation}
Let us notice that  $\|\chi\|_{L^\infty(\Gamma,\mu)}=1$ for all $\chi\in U_{ad}(\beta)$, while 
for all $\chi\in U_{ad}^*(\beta)$ it holds 
\begin{equation}\label{EqBLInfNorm}
	0<\beta\le \| \chi\|_{L^\infty(\Gamma,\mu)}\le 1.
\end{equation}

Therefore, we have, in the same way as in~\cite[Proposition 7.2.14]{HENROT-2005},
\begin{theorem}\label{ThU*}
	Let $\beta\in ]0,1[$ be fixed. If $U^*_{ad}(\beta)$ is given by~\eqref{EqUad*}, then
	$U^*_{ad}(\beta)$ is the weak$^*$ closed convex hull of $U_{ad}(\beta)$ and $U_{ad}(\beta)$ is exactly
the set of extreme points of the convex set $U^*_{ad}(\beta)$. 
	\end{theorem}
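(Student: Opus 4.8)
The plan is to identify $U^*_{ad}(\beta)$ as a weak$^*$ compact convex set whose set of extreme points is exactly $U_{ad}(\beta)$, and then to invoke the Krein--Milman theorem. A preliminary observation underpinning the whole argument is that the boundary measure $\mu$ is non-atomic: since $\mu(B_r(x))\le c_d r^d$ for all $r\in]0,1]$ by \eqref{Eqmu}, letting $r\to 0$ gives $\mu(\{x\})=0$ for every $x\in\del\Omega$, and a finite Borel measure on $\R^n$ all of whose singletons are null is atomless. In particular, by Sierpi\'nski's theorem, for every Borel set $E$ with $\mu(E)>0$ and every $s\in[0,\mu(E)]$ there is a Borel subset $A\subseteq E$ with $\mu(A)=s$; this is the only point at which the geometry of $\del\Omega$ enters the proof.

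First I would record the structural facts. Convexity of $U^*_{ad}(\beta)$ is immediate, since a convex combination of $[0,1]$-valued functions is $[0,1]$-valued and the constraint $\int_\Gamma\chi\,d\mu=\beta$ is linear. For weak$^*$ compactness, note that $U^*_{ad}(\beta)$ lies in the closed unit ball of $L^\infty(\Gamma,\mu)$; as $\mu$ is finite, $L^\infty(\Gamma,\mu)=(L^1(\Gamma,\mu))'$ and this ball is weak$^*$ compact by Banach--Alaoglu. It then suffices to check that $U^*_{ad}(\beta)$ is weak$^*$ closed: testing a weak$^*$ convergent net $\chi_j\to\chi$ against nonnegative $g\in L^1(\Gamma,\mu)$ preserves both $\chi\ge0$ and $\chi\le1$, while testing against the constant function $1\in L^1(\Gamma,\mu)$ (here finiteness of $\mu$ is used) preserves $\int_\Gamma\chi\,d\mu=\beta$.

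The heart of the proof is the identification of the extreme points. That every $\chi\in U_{ad}(\beta)$ is extreme follows from a pointwise argument: if $\chi=t\chi_1+(1-t)\chi_2$ with $t\in]0,1[$ and $\chi_1,\chi_2\in U^*_{ad}(\beta)$, then on $\{\chi=0\}$ nonnegativity forces $\chi_1=\chi_2=0$ and on $\{\chi=1\}$ the bound $\le1$ forces $\chi_1=\chi_2=1$, so $\chi_1=\chi_2=\chi$. Conversely, if $\chi\in U^*_{ad}(\beta)$ is not $\{0,1\}$-valued, then $\mu(E_\delta)>0$ for some $\delta>0$, where $E_\delta=\{\delta\le\chi\le1-\delta\}$; using the atomlessness of $\mu$ I would split $E_\delta=A\cup B$ with $\mu(A)=\mu(B)=\tfrac12\mu(E_\delta)$ and set $g=\delta(\mathbf{1}_A-\mathbf{1}_B)$. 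Then $\int_\Gamma g\,d\mu=0$, so $\chi\pm g$ both satisfy the mass constraint, while the choice of $\delta$ guarantees $0\le\chi\pm g\le1$; since $g\not\equiv0$, the decomposition $\chi=\tfrac12(\chi+g)+\tfrac12(\chi-g)$ exhibits $\chi$ as a nontrivial convex combination, so $\chi$ is not extreme. Hence $\operatorname{ext}U^*_{ad}(\beta)=U_{ad}(\beta)$.

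Finally, since $U^*_{ad}(\beta)$ is convex and weak$^*$ compact in the locally convex Hausdorff space $(L^\infty(\Gamma,\mu),w^*)$, the Krein--Milman theorem yields $U^*_{ad}(\beta)=\overline{\operatorname{conv}}^{\,w^*}(\operatorname{ext}U^*_{ad}(\beta))=\overline{\operatorname{conv}}^{\,w^*}(U_{ad}(\beta))$; the reverse inclusion $\overline{\operatorname{conv}}^{\,w^*}(U_{ad}(\beta))\subseteq U^*_{ad}(\beta)$ is automatic because $U^*_{ad}(\beta)$ is a weak$^*$ closed convex set containing $U_{ad}(\beta)$. I expect the main obstacle to be the non-extremality direction: it is where the measure-theoretic input is essential, and one must verify carefully that the perturbation $g$ keeps $\chi\pm g$ inside $U^*_{ad}(\beta)$, which is precisely why both the truncation level $\delta$ and the equal splitting afforded by the atomlessness of $\mu$ are needed.
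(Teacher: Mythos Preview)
Your proof is correct and follows the standard Krein--Milman route that the paper defers to via its citation of \cite[Proposition~7.2.14]{HENROT-2005}; the paper does not give an independent argument. Your explicit observation that the $d$-upper regularity condition \eqref{Eqmu} forces $\mu$ to be atomless, and that this is precisely what is needed to carry over the Henrot--Pierre argument from Lebesgue measure to the present setting, is a genuinely useful addition that the paper leaves implicit.
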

We denote by $J^*: I\times U^*_{ad}(\beta)\to \R$ the extended functional (with, as previously for $J$,  positive constants $A\ge 0$, $B\ge 0$ and $C\ge0$, $A^2+B^2>0$ and a fixed frequecy $f\in I$)
\begin{equation}\label{EqJ*}
	\forall \chi\in U^*_{ad}(\beta) \; J^*(f,\chi)=A\int_\Omega| u(f,\chi)|^2\dx+B\int_\Omega |\nabla u(f,\chi)|^2 \dx+C\int_\Gamma |\operatorname{Tr} u(f,\chi)|^2 d\mu,
\end{equation}
which in addition satisfies $J^*(f,\chi)|_{U_{ad}(\beta)}=J(f,\chi)$. 
Here, $u(f,\chi)$ is the weak solution of system~\eqref{EqHelmChi} found for a chosen $(f,\chi)$.
We also denote
\begin{equation}\label{EqhatJ*}
	\forall \chi\in U^*_{ad}(\beta) \quad \hat{J}^*(\chi)=\int_I J^*(f,\chi) d f,
\end{equation}
satisfying $\hat{J}^*(\chi)|_{U_{ad}(\beta)}=\hat{J}(\chi)$.

To solve the parametric optimization problem on $U^*_{ad}(\beta)$ we need to ensure that the constant $\hat{C}$ in estimate~\eqref{EqApriorichi} does not depend on $\chi$, when $\chi\in U^*_{ad}(\beta)$. 
If $\mu(\Gamma_{Dir})>0$, then it follows from the upper uniform boundness of the $L^\infty$ norm of all $\chi$ on $U^*_{ad}(\beta)$ (see~\eqref{EqBLInfNorm}) and the equivalence of norms with uniform on $\chi$ constants:   for all $\chi \in U^*_{ad}(\beta)$ 
there exist $C_0>0$ independent on $\chi \in U^*_{ad}(\beta)$ such that
\begin{equation}\label{EqNormEq}
	\forall v\in V(\Omega) \quad \|v\|_{H^1_0(\Omega)}\le \|v\|_{V(\Omega),\chi}\le C_0 \|v\|_{H^1_0(\Omega)}.
\end{equation}
For the proof, it is sufficient to apply the continuity of the trace operator and the Poincaré inequality ($\Omega$ is  fixed  in our framework), and to obtain $$C_0=1+\|\alpha_R\|_{C(\Gamma)}C(\|\mathrm{Tr}\|_{\mathcal{L}(V(\Omega),L^2(\Gamma,\mu))})C_P(\lambda(\Omega)),$$ independent on $\chi$. 

In the case of $\Gamma=\del \Omega$, we need to use~\cite[Corollary~3.2]{HINZ-2023} with the uniform on $\chi$ constants for the norm equivalences on $H^1(\Omega)$ (with $\|\cdot\|_{H^1(\Omega)}$ in~\eqref{EqNormEq} instead of $\|\cdot\|_{H^1_0(\Omega)}$). Therefore, we have to add the assumption that for all $\chi \in U^*_{ad}(\beta)$, $\chi\ge \chi_{min}>0$ on $\operatorname{supp}\chi$ for a fixed uniform constant $\chi_{min}>0$. In other words, for the case $\Gamma=\del \Omega$, instead of $U_{ad}(\beta)$ and $U_{ad}^*(\beta)$, we consider  $U_{ad}(\beta,\chi_{\min})$ and $U_{ad}^*(\beta,\chi_{\min})$ respectively. In what follows, we will however use only the notation $U_{ad}(\beta)$ and $U_{ad}^*(\beta)$, which should be understood with this corrective uniform lower boundness by $\chi_{min}>0$ condition for the pure Robin case $\Gamma=\del \Omega$.
\begin{lemma}\label{LemIndepChi}
	Let $\beta\in ]0,1[$ (for $\mu(\Gamma)=1$) be fixed and all assumptions of Theorem~\ref{ThWPchi} hold. Then for all $\chi\in U^*_{ad}(\beta)$, there exists a constant $\hat{C}^*>0$, depending only on  $\alpha$, $k$ and on $C_P$ (the Poincaré uniform constant depending only on $\eps$, $n$, $d$, $c_d$ and $\lambda(\Omega)$), \textit{but not on $\chi$}, such that estimate~\eqref{EqApriorichi} holds for the corresponding weak solution of~\eqref{EqHelmChi} on the fixed Sobolev admissible domain $(\Omega,\mu)$. 
\end{lemma}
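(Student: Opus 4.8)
The plan is to reduce the statement to a \emph{uniform} a priori bound in a $\chi$-independent norm and then to establish that bound by a compactness--contradiction argument, since a direct estimate fails: testing \eqref{EqVFchi} with $\phi=u$ produces the indefinite term $-k^2\|u\|_{L^2(\Omega)}^2$, so the sesquilinear form is not coercive and the whole $\chi$-dependence is hidden in the operator norm of $(Id-k^2T)^{-1}$ from Remark~\ref{RemOptT}, which cannot be read off explicitly. As a first reduction I would record that, by \eqref{EqNormEq} when $\mu(\Gamma_{Dir})>0$ (respectively by \cite[Corollary~3.2]{HINZ-2023} together with the lower bound $\chi\ge\chi_{\min}$ in the pure Robin case $\Gamma=\del\Omega$), all the norms $\|\cdot\|_{V(\Omega),\chi}$ with $\chi\in U^*_{ad}(\beta)$ are equivalent to the canonical $H^1_0(\Omega)$- (resp.\ $H^1(\Omega)$-) norm, with constants uniform in $\chi$ depending only on $\alpha$, $C_P$ and the trace norm. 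It therefore suffices to bound $\|u(\chi)\|_{H^1(\Omega)}$ by $\|F\|_{L^2(\Omega)}+\|\eta\|_{L^2(\Gamma,\mu)}$ with a $\chi$-independent constant.

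Then I would argue by contradiction. If no such constant existed, there would be $\chi_n\in U^*_{ad}(\beta)$ and data $(F_n,\eta_n)$ whose weak solutions $u_n$ satisfy $\|u_n\|_{H^1(\Omega)}=1$ while $\|F_n\|_{L^2(\Omega)}+\|\eta_n\|_{L^2(\Gamma,\mu)}\to 0$. Since $\mu$ is a finite Borel measure on the compact boundary $\del\Omega$, the space $L^1(\Gamma,\mu)$ is separable, so the bounded, weak$^*$ closed set $U^*_{ad}(\beta)$ (Theorem~\ref{ThU*}) is weak$^*$ sequentially compact and I extract $\chi_n\rightharpoonup^*\chi_*\in U^*_{ad}(\beta)$. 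The bound $\|u_n\|_{H^1(\Omega)}=1$ gives $u_n\rightharpoonup u_*$ in $H^1(\Omega)$, and the compact embedding $H^1(\Omega)\hookrightarrow L^2(\Omega)$ together with the compactness of $\operatorname{Tr}:H^1(\Omega)\to L^2(\Gamma,\mu)$ (both available on Sobolev admissible domains, see Remark~\ref{RemOptT}) upgrade this to $u_n\to u_*$ strongly in $L^2(\Omega)$ and $\operatorname{Tr}u_n\to\operatorname{Tr}u_*$ strongly in $L^2(\Gamma,\mu)$.

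The decisive step is the limit passage in \eqref{EqVFchi} against a fixed $\phi$. The boundary terms $\int_\Gamma\alpha\chi_n\operatorname{Tr}u_n\operatorname{Tr}\overline{\phi}\,d\mu$ converge to $\int_\Gamma\alpha\chi_*\operatorname{Tr}u_*\operatorname{Tr}\overline{\phi}\,d\mu$ by a weak$^*$--strong pairing: $\operatorname{Tr}u_n\operatorname{Tr}\overline{\phi}\to\operatorname{Tr}u_*\operatorname{Tr}\overline{\phi}$ in $L^1(\Gamma,\mu)$ while $\chi_n\rightharpoonup^*\chi_*$ stays bounded in $L^\infty(\Gamma,\mu)$; the volume terms pass by weak $H^1$- and strong $L^2$-convergence, and the right-hand sides vanish. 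Hence $u_*$ solves the homogeneous problem ($F=0$, $\eta=0$) with coefficient $\chi_*$. Because $\int_\Gamma\chi_*\,d\mu=\beta>0$ is preserved under weak$^*$ convergence, the set $\{\chi_*\ge\beta/2\}$ has positive $\mu$-measure, so $\chi_*$ meets the hypotheses of Theorem~\ref{ThWPchi}, whose uniqueness part forces $u_*=0$. Finally, testing \eqref{EqVFchi} for $u_n$ with $\phi=u_n$ and taking the real part gives $\|\nabla u_n\|_{L^2(\Omega)}^2=k^2\|u_n\|_{L^2(\Omega)}^2-\int_\Gamma\operatorname{Re}(\alpha)\chi_n|\operatorname{Tr}u_n|^2\,d\mu+\operatorname{Re}\big(\int_\Gamma\eta_n\operatorname{Tr}\overline{u_n}\,d\mu-\int_\Omega F_n\overline{u_n}\,\dx\big)$, where each term on the right tends to $0$ (using $u_n\to0$ in $L^2(\Omega)$, $\operatorname{Tr}u_n\to0$ in $L^2(\Gamma,\mu)$, and $\|F_n\|_{L^2(\Omega)}+\|\eta_n\|_{L^2(\Gamma,\mu)}\to0$ with $\|u_n\|_{H^1(\Omega)}=1$). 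Thus $\|\nabla u_n\|_{L^2(\Omega)}\to0$ and $\|u_n\|_{H^1(\Omega)}\to0$, contradicting $\|u_n\|_{H^1(\Omega)}=1$; this yields the uniform $\hat{C}^*$, and tracking constants through the norm equivalence shows it depends only on $\alpha$, $k$ and $C_P$.

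The main obstacle is exactly this limit passage combined with uniqueness: one must guarantee that the weak$^*$ limit $\chi_*$ is still an admissible coefficient for which Theorem~\ref{ThWPchi} provides uniqueness, which is secured by the conservation of the mass constraint $\int_\Gamma\chi\,d\mu=\beta$ under weak$^*$ convergence, and that the $\chi$-dependent boundary form converges, which rests crucially on the compactness of the trace operator on the possibly non-Lipschitz Sobolev admissible domain.
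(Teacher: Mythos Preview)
Your argument is correct. The paper itself does not supply a formal proof of this lemma: the only justification is the paragraph immediately preceding the statement, which records the uniform-in-$\chi$ equivalence~\eqref{EqNormEq} between $\|\cdot\|_{V(\Omega),\chi}$ and the canonical $H^1_0(\Omega)$-norm (resp.\ $H^1(\Omega)$-norm in the pure Robin case via~\cite[Corollary~3.2]{HINZ-2023}). That equivalence is necessary but not by itself sufficient: as you correctly point out, the constant $\hat C$ in~\eqref{EqApriorichi} comes from the Fredholm inverse $\|(Id-k^2T)^{-1}\|$ of Remark~\ref{RemOptT}, and $T$ depends on $\chi$ both through the inner product and through the absorbing boundary term, so uniformity in $\chi$ does not follow from norm equivalence alone. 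Your compactness--contradiction argument fills this gap cleanly. The weak$^*$--strong pairing you use to pass to the limit in the boundary term, together with the compactness of the trace on Sobolev admissible domains, is exactly the mechanism the paper itself exploits later in the proof of Lemma~\ref{LemContWeak}, so your route meshes well with the surrounding material even though the paper does not carry it out for Lemma~\ref{LemIndepChi}.

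One small overreach: a contradiction argument gives \emph{existence} of a uniform constant but does not let you ``track constants'' explicitly, so your final sentence is not quite accurate. What you actually obtain is a constant depending on $\alpha$, $k$ and the fixed Sobolev admissible domain $(\Omega,\mu)$ (hence on $n,\eps,d,c_d,\lambda(\Omega)$ and the trace norm), but independent of $\chi\in U^*_{ad}(\beta)$; this is precisely the content of the lemma, and the paper's own phrasing ``depending only on $\alpha$, $k$ and on $C_P$'' should be read in the same spirit.
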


Now we state our main result:
\begin{theorem}\label{ThMainROptParam}
	Let $\Gamma$, $F$, $\eta$, $f=f_0\in I$, $\alpha(f_0)$ be fixed in a way that all assumptions of Theorem~\ref{ThWPchi} are satisfied on a fixed Sobolev admissible domain $(\Omega,\mu)$ of $\R^n$ with $\mu(\Gamma)=1$.  Then
	for a fixed $\beta\in ]0, 1[$,  there exists (at least one) optimal distribution $\chi^{opt}\in U^*_{ad}(\beta)$ and the corresponding optimal solution $u(f_0,\chi^{opt})\in V(\Omega)$ of system~\eqref{EqHelmChi}, such that
	\begin{equation}\label{EqExistMin}
	J^*(f_0,\chi^{opt})=\min_{\chi \in U^*_{ad}(\beta)} J^*(f_0,\chi)=\inf_{\chi \in U_{ad}(\beta)} J(f_0,\chi),	
	\end{equation}
	and there exists $\hat{\chi}^{opt}\in U^*_{ad}(\beta)$ such that on a fixed bounded plage of frequencies $I\subset \R^{+ *}$
	\begin{equation}\label{EqExistFreqMin}   
		\hat{J}^*(\hat{\chi}^{opt})=\min_{\chi \in U^*_{ad}(\beta)} \hat{J}^*(\chi)=\inf_{\chi \in U_{ad}(\beta)} \hat{J}(\chi).		
	\end{equation}

	In addition, the functional $J^*$ is Fréchet differentiable on $\chi\in U^*_{ad}(\beta)$. Its directional derivative in $\chi\in U^*_{ad}(\beta)$ in the direction $h\in L^\infty(\Gamma,\mu)$ with $\int_\Gamma h d \mu =0$ and such that $\chi+h\in U^*_{ad}(\beta)$  is given by
	 \begin{equation}\label{EqJder}
	 	\langle D_\chi J^*(f_0,\chi),h\rangle=-\int_\Gamma h \mathrm{Re} (\alpha(f_0) u(f_0,\chi) p(f_0,\chi))d \mu,
	\end{equation}
	where $u(f_0,\chi)$ is the weak solution of~\eqref{EqHelmChi}  and $p(f_0,\chi)$ is the weak solution of the adjoint problem (with $k=\frac{2\pi f_0}{c}$):
	\begin{multline}\label{EqVFAdjProb}
\forall \phi\in V(\Omega) \quad	(p,\phi)_{V(\Omega),{\chi}}-k^2(p,\phi)_{L^2(\Omega)}+i(\mathrm{Im}(\alpha) \chi\mathrm{Tr} u, \mathrm{Tr}\phi)_{L^2(\Gamma,\mu)}\\
=(2A\overline{u}(f_0,\chi)-2B\Delta \overline{u}(f_0,\chi),\phi)_{L^2(\Omega)}+(2C \overline{u}(f_0,\chi)-2B\chi\overline{\alpha}\overline{u}(f_0,\chi),\mathrm{Tr} \phi)_{L^2(\Gamma,\mu)}.
\end{multline}
	 Finally, the Frechet derivative on $\chi\in U^*_{ad}(\beta)$ of $\hat{J}^*$ is given by
	 \begin{equation}\label{EqDerJhat}
	 	D_\chi \hat{J}^*(\chi)=D_\chi\left( \int_I J^*(f,\chi) d f\right)=\int_I D_\chi J^*(f,\chi) d f.
	 \end{equation}

\end{theorem}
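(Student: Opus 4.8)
The plan is to handle the two existence statements \eqref{EqExistMin}--\eqref{EqExistFreqMin} by the direct method of the calculus of variations on the relaxed set $U^*_{ad}(\beta)$, and the two differentiability statements \eqref{EqJder}--\eqref{EqDerJhat} by the adjoint (Lagrangian) method; the whole argument rests on the uniform-in-$\chi$ a priori bound of Lemma~\ref{LemIndepChi}. For existence at a fixed frequency $f_0$, note first that $U^*_{ad}(\beta)$ is convex, bounded in $L^\infty(\Gamma,\mu)$ and weak$^*$ closed by Theorem~\ref{ThU*}; since $L^1(\Gamma,\mu)$ is separable, Banach--Alaoglu makes it weak$^*$ sequentially compact. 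The heart of the matter is continuity of the control-to-state map $\chi\mapsto u(f_0,\chi)$ from $U^*_{ad}(\beta)$ (weak$^*$) into $V(\Omega)$ (strong). Given $\chi_n\to\chi$ weak$^*$, Lemma~\ref{LemIndepChi} bounds $u_n:=u(f_0,\chi_n)$ uniformly in $V(\Omega)$, so along a subsequence $u_n\rightharpoonup u^\ast$ in $H^1(\Omega)$; compactness of the trace operator and of the embedding $H^1(\Omega)\hookrightarrow L^2(\Omega)$ (valid on extension domains) upgrade this to $\mathrm{Tr}\,u_n\to\mathrm{Tr}\,u^\ast$ in $L^2(\Gamma,\mu)$ and $u_n\to u^\ast$ in $L^2(\Omega)$. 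Passing to the limit in \eqref{EqVFchi} is then routine: in $\int_\Gamma\alpha\chi_n\mathrm{Tr}\,u_n\mathrm{Tr}\,\overline\phi\,d\mu$ I split $\chi_n\mathrm{Tr}\,u_n=\chi_n(\mathrm{Tr}\,u_n-\mathrm{Tr}\,u^\ast)+\chi_n\mathrm{Tr}\,u^\ast$, the first piece vanishing by strong $L^2$ convergence and the second converging by $\chi_n\to\chi$ weak$^*$ tested against $\mathrm{Tr}\,u^\ast\,\mathrm{Tr}\,\overline\phi\in L^1(\Gamma,\mu)$, so that $u^\ast$ solves \eqref{EqVFchi} with coefficient $\chi$ and hence $u^\ast=u(f_0,\chi)$ by uniqueness.

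To reach strong $H^1$ convergence (needed because the gradient term in $J^*$ is only weakly lower semicontinuous), I test \eqref{EqVFchi} with $\phi=u_n$ and take real parts, which kills the $\mathrm{Im}(\alpha)$ term and yields $\int_\Omega|\nabla u_n|^2\,dx=k^2\|u_n\|_{L^2}^2+\mathrm{Re}[(\eta,\mathrm{Tr}\,u_n)-(F,u_n)]-\int_\Gamma\mathrm{Re}(\alpha)\chi_n|\mathrm{Tr}\,u_n|^2\,d\mu$; every term on the right converges (the last again by splitting $\chi_n$ against $|\mathrm{Tr}\,u_n|^2\to|\mathrm{Tr}\,u^\ast|^2$ in $L^1$), and the limit coincides with the analogous identity for $u^\ast$, whence $\|\nabla u_n\|_{L^2}\to\|\nabla u^\ast\|_{L^2}$; with the weak convergence this gives $u_n\to u^\ast$ strongly in $V(\Omega)$. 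Thus $J^*(f_0,\cdot)$ is weak$^*$ continuous and bounded below by $0$, so a minimizing sequence produces a minimizer $\chi^{opt}$, giving the first equality in \eqref{EqExistMin}. The equality $\min_{U^*_{ad}}J^*=\inf_{U_{ad}}J$ follows from $U_{ad}(\beta)\subset U^*_{ad}(\beta)$ (giving $\le$) and from the weak$^*$ density of $U_{ad}(\beta)$ in $U^*_{ad}(\beta)$ with the continuity of $J^*$ (giving $\ge$, by approximating $\chi^{opt}$ with characteristic functions). The range statement \eqref{EqExistFreqMin} is identical once this pointwise-in-$f$ continuity is integrated over $I$: the bound of Lemma~\ref{LemIndepChi}, uniform for $k=k(f)$ as $f$ ranges over the compact $I$ (real $k$ never meets the discrete spectrum of the absorbing operator, Remark~\ref{RemOptT}), lets me apply dominated convergence to $\hat J^*(\chi_n)=\int_I J^*(f,\chi_n)\,df$.

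For the derivative I first show $\chi\mapsto u(f_0,\chi)$ is Fréchet differentiable: subtracting the formulations for $\chi+th$ and $\chi$ and dividing by $t$, the difference quotient $w_t=(u(\chi+th)-u(\chi))/t$ solves $a_\chi(w_t,\phi)=-\int_\Gamma\alpha h\,\mathrm{Tr}\,u(\chi+th)\,\mathrm{Tr}\,\overline\phi\,d\mu$, with $a_\chi$ the sesquilinear form of \eqref{EqVFchi}; as $t\to0$, the uniform bounds and the continuity $u(\chi+th)\to u(\chi)$ give $w_t\to u'$ in $V(\Omega)$, where the sensitivity $u'=\langle D_\chi u,h\rangle$ solves the same well-posed problem with $u(\chi+th)$ replaced by $u(\chi)$, and uniformity of the constants makes the remainder $o(\|h\|)$. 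The chain rule then yields $\langle D_\chi J^*(f_0,\chi),h\rangle=2\,\mathrm{Re}[A(u',u)_{L^2(\Omega)}+B(\nabla u',\nabla u)_{L^2(\Omega)}+C(\mathrm{Tr}\,u',\mathrm{Tr}\,u)_{L^2(\Gamma,\mu)}]$. Introducing the adjoint state $p$ of \eqref{EqVFAdjProb}, whose right-hand side is exactly $\langle D_u J^*,\cdot\rangle$, and using $p$ as test function in the sensitivity equation while testing the adjoint equation against $u'$, eliminates $u'$ and leaves $\langle D_\chi J^*,h\rangle=-\int_\Gamma h\,\mathrm{Re}(\alpha\,u\,p)\,d\mu$, i.e.\ \eqref{EqJder}; here differentiability is understood on admissible directions $h$ with $\int_\Gamma h\,d\mu=0$ and $\chi+h\in U^*_{ad}(\beta)$. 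Finally \eqref{EqDerJhat} is differentiation under the integral sign, justified by the uniform-in-$f$ bounds on $u$ and $p$ over the compact interval $I$.

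I expect the main obstacle to be precisely the passage to strong $H^1$ convergence of the states under merely weak$^*$ convergence of the coefficients: the gradient term is not weak$^*$ continuous on its own, and recovering norm convergence demands the real-part energy identity together with careful handling of the products $\chi_n|\mathrm{Tr}\,u_n|^2$, in which both factors move. A secondary difficulty is the bookkeeping of conjugates and of the adjoint boundary condition in the Lagrangian step — in particular the Green-formula terms generated when $B>0$ — needed to land exactly on the unconjugated combination $\mathrm{Re}(\alpha\,u\,p)$ appearing in \eqref{EqJder}.
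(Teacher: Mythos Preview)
Your proposal is correct, and the overall architecture matches the paper's: relax to $U^*_{ad}(\beta)$, prove weak$^*$ continuity of the control-to-state map, deduce existence, then compute the derivative by the adjoint/Lagrangian method after establishing Fr\'echet differentiability of $\chi\mapsto u(\chi)$.

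The one genuine difference is in how strong $V(\Omega)$-convergence of $u(\chi_n)$ is obtained from weak$^*$ convergence of $\chi_n$. The paper (Lemma~\ref{LemContWeak}) does not pass to the limit in the variational formulation for $u_n$; instead it writes the \emph{difference} $v_j=u(\chi_j)-u(\chi)$, which solves a Helmholtz problem with boundary source $\alpha(\chi-\chi_j)\mathrm{Tr}\,u(\chi)$, and then exploits the Fredholm representation $(Id-k^2T)$ of Remark~\ref{RemOptT}: boundedness of $v_j$ yields a weak limit, injectivity of $Id-k^2T$ forces that limit to be $0$, and compactness of $T$ upgrades the convergence to strong. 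Your route---identify the weak limit $u^\ast$ by passing to the limit in~\eqref{EqVFchi}, then recover $\|\nabla u_n\|\to\|\nabla u^\ast\|$ from the real-part energy identity---is more elementary and avoids invoking $T$ explicitly, at the price of having to control the mixed product $\chi_n|\mathrm{Tr}\,u_n|^2$ (which you do correctly via the splitting argument). The paper's route reuses the well-posedness machinery wholesale and sidesteps that product entirely. For the differentiability part the paper's Lemma~\ref{LemDiffer} bounds the remainder $u(\chi+h)-u(\chi)-\psi$ directly by $C\|h\|_{L^\infty}^2$ rather than via difference quotients, and derives~\eqref{EqJder} from a real-valued Lagrangian obtained by splitting~\eqref{EqVFchi} into real and imaginary parts; in content this coincides with your adjoint computation.
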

\begin{remark}
In the source therm of~\eqref{EqVFAdjProb} $\overline{u}$ denotes the complex conjugate of $u$.
        The regularity $\Delta u\in L^2(\Omega)$ holds as in the distributional sense $\Delta u=-k^2 u+F\in L^2(\Omega)$.
        \end{remark}
\subsection{Existence of optimal shapes}\label{SubsExistenceShape}

To prove Theorem~\ref{ThMainROptParam}, we firstly show the continuity of the energy
for a fixed frequency $f_0\in I$ on $\chi\in U^*_{ad}(\beta)$ by the weak$^*$ topology. As the frequency is supposed to be fixed, we can simplify the notations a little bit by omitting $f_0$. Instead, in what follows, we explicitly write that the energy depends on the solution of the Helmholtz problem:
	$u(f_0,\chi)$ and  $J^*(f_0,\chi)$  are denoted by $u(\chi)$  and  $J^*(\chi,u(\chi))$  respectively.

\begin{lemma}\label{LemContWeak} Let  $f_0\in I$ be fixed.
	If $(\chi_j)_{j\in \N}\subset U^*_{ad}(\beta)$ such that 
	$\chi_j\stackrel{*}{\rightharpoonup} \chi \hbox{ in } L^\infty(\Gamma,\mu),$
	then $\chi \in U^*_{ad}(\beta)$, $u(\chi_j)\to u(\chi)$ in $V(\Omega)$  and for a constant $c=c(A,B,C)>0$
	\begin{multline}\label{EqJ*Cont}
		|J^*(\chi_j,u(\chi_j))-J^*(\chi,u(\chi))|\le c \left|\|u(\chi_j)\|^2_{V(\Omega)}-\|u(\chi)\|^2_{V(\Omega)}\right| \to 0 \quad j\to +\infty.
	\end{multline}
	\end{lemma}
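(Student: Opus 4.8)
The plan is to establish the three assertions in order: that the weak-$*$ limit $\chi$ remains in $U^*_{ad}(\beta)$, that the associated solutions converge \emph{strongly} in $V(\Omega)$, and that the energy is consequently continuous. Throughout write $u_j:=u(\chi_j)$ and $u:=u(\chi)$. The membership $\chi\in U^*_{ad}(\beta)$ is the routine part: the pointwise bound $0\le\chi\le1$ survives the weak-$*$ limit because testing $\chi_j\stackrel{*}{\rightharpoonup}\chi$ against nonnegative elements of $L^1(\Gamma,\mu)$ forces $\chi\ge0$ and $1-\chi\ge0$ $\mu$-a.e.; and since $\mu(\Gamma)=1<\infty$ the constant $1$ lies in $L^1(\Gamma,\mu)$, so $\int_\Gamma\chi\,d\mu=\lim_j\int_\Gamma\chi_j\,d\mu=\beta$, whence $\chi\in U^*_{ad}(\beta)$ as defined in \eqref{EqUad*}.

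The core is the convergence of solutions. Lemma~\ref{LemIndepChi} gives the a priori bound \eqref{EqApriorichi} with a constant independent of $\chi_j$, so by the norm equivalence \eqref{EqNormEq} the sequence $(u_j)$ is bounded in $V(\Omega)$. I would extract a weakly convergent subsequence $u_j\rightharpoonup u^*$ in $V(\Omega)$; compactness of the trace operator and of the embedding $H^1(\Omega)\hookrightarrow L^2(\Omega)$ then upgrade this to $u_j\to u^*$ in $L^2(\Omega)$ and $\mathrm{Tr}\,u_j\to\mathrm{Tr}\,u^*$ in $L^2(\Gamma,\mu)$. Passing to the limit in \eqref{EqVFchi} written for $\chi_j$, the only delicate terms are the boundary integrals carrying $\chi_j$, e.g. $\int_\Gamma\mathrm{Re}(\alpha)\chi_j\,\mathrm{Tr}\,u_j\,\mathrm{Tr}\,\overline{\phi}\,d\mu$, which I split as
\[
\int_\Gamma\mathrm{Re}(\alpha)\chi_j(\mathrm{Tr}\,u_j-\mathrm{Tr}\,u^*)\mathrm{Tr}\,\overline{\phi}\,d\mu+\int_\Gamma\mathrm{Re}(\alpha)(\chi_j-\chi)\mathrm{Tr}\,u^*\,\mathrm{Tr}\,\overline{\phi}\,d\mu.
\]
The first piece vanishes since $\|\chi_j\|_{L^\infty}\le1$ and the trace converges strongly; the second vanishes since $\mathrm{Tr}\,u^*\,\mathrm{Tr}\,\overline{\phi}\in L^1(\Gamma,\mu)$ and $\chi_j-\chi\stackrel{*}{\rightharpoonup}0$. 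The same weak-$*$/strong splitting handles the $\mathrm{Im}(\alpha)$ term, and the remaining terms pass to the limit by weak $V(\Omega)$- and strong $L^2$-convergence. Thus $u^*$ solves \eqref{EqVFchi} with datum $\chi$, and uniqueness (Theorem~\ref{ThWPchi}) gives $u^*=u$; as the limit is independent of the subsequence, the whole sequence satisfies $u_j\rightharpoonup u$.

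To upgrade to strong convergence I would test \eqref{EqVFchi} for $\chi_j$ with $\phi=u_j$ and take real parts, so that the purely imaginary damping term drops out, leaving
\[
\|u_j\|^2_{V(\Omega),\chi_j}=k^2\|u_j\|^2_{L^2(\Omega)}+\mathrm{Re}\big[(\eta,\mathrm{Tr}\,u_j)_{L^2(\Gamma,\mu)}-(F,u_j)_{L^2(\Omega)}\big].
\]
By the strong $L^2$- and trace-convergence already obtained, the right-hand side tends to $k^2\|u\|^2_{L^2(\Omega)}+\mathrm{Re}[(\eta,\mathrm{Tr}\,u)-(F,u)]=\|u\|^2_{V(\Omega),\chi}$. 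It remains to replace $\chi_j$ by $\chi$ on the left: since $\|u_j\|^2_{V(\Omega),\chi_j}-\|u_j\|^2_{V(\Omega),\chi}=\int_\Gamma\mathrm{Re}(\alpha)(\chi_j-\chi)|\mathrm{Tr}\,u_j|^2\,d\mu$ and $|\mathrm{Tr}\,u_j|^2\to|\mathrm{Tr}\,u|^2$ in $L^1(\Gamma,\mu)$, the same weak-$*$/strong argument shows this difference vanishes. Hence $\|u_j\|_{V(\Omega),\chi}\to\|u\|_{V(\Omega),\chi}$, and combined with $u_j\rightharpoonup u$ in the Hilbert space $(V(\Omega),(\cdot,\cdot)_{V(\Omega),\chi})$ of \eqref{EqScPVchi} this yields $\|u_j-u\|_{V(\Omega),\chi}\to0$, i.e.\ strong convergence in $V(\Omega)$.

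Finally, the energy continuity \eqref{EqJ*Cont} is then immediate: by \eqref{EqJ*} the functional depends on $\chi$ only through $u(\chi)$ via the three nonnegative quadratic terms $\|u\|^2_{L^2(\Omega)}$, $\|\nabla u\|^2_{L^2(\Omega)}$ and $\|\mathrm{Tr}\,u\|^2_{L^2(\Gamma,\mu)}$, each continuous under strong $V(\Omega)$-convergence (the $L^2(\Omega)$ term by the embedding, the boundary term by continuity of $\mathrm{Tr}$). Writing each difference of squares via the associated bilinear form and dominating it by the $V(\Omega)$-norm through the Poincaré inequality and trace continuity produces the stated bound with $c=c(A,B,C)$, whose right-hand side converges to $0$. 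I expect the main obstacle to be precisely the two limit passages in the $\chi_j$-dependent boundary terms and, above all, the real-part energy identity that converts weak convergence into norm (hence strong) convergence; the membership and energy-continuity steps are routine by comparison.
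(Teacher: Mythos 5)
Your proof is correct, but the central step---upgrading weak to strong convergence of $u(\chi_j)$---is done by a genuinely different mechanism than in the paper. The paper works with the difference $v_j=u(\chi_j)-u(\chi)$, which solves the Helmholtz problem with boundary datum $\alpha(\chi-\chi_j)\operatorname{Tr}u$, and then exploits the Fredholm operator form~\eqref{EqOpT}: the weak limit of $v_j$ is $0$ by injectivity of $Id-k^2T$, and strong convergence follows from compactness of $T$ together with compactness of the trace (testing~\eqref{EqNE} with $\phi=v_{j_i}$). You instead work directly with $u_j$, identify the weak limit by passing to the limit in~\eqref{EqVFchi} via the weak-$*$/strong splitting of the $\chi_j$-dependent boundary terms, and then obtain norm convergence from the real-part energy identity (the damping term being purely imaginary) combined with the compactness of the embedding $H^1(\Omega)\subset L^2(\Omega)$ and of the trace; weak convergence plus norm convergence in the Hilbert space $(V(\Omega),(\cdot,\cdot)_{V(\Omega),\chi})$ then gives strong convergence. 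Both routes rest on the same compactness ingredients, but yours is more self-contained (it does not invoke Remark~\ref{RemOptT}) and it makes fully explicit the limit passage in the $\chi_{j}$-dependent inner products, a point the paper's proof glosses over when letting $(\,\cdot\,,\cdot\,)_{V(\Omega),\chi_{j_i}}$ tend to $(\,\cdot\,,\cdot\,)_{V(\Omega),\chi}$ in~\eqref{EqNE}; the paper's route, in exchange, reuses structure already established for the well-posedness proof and avoids the energy identity. One cosmetic remark: your closing argument actually yields $|J^*(\chi_j,u_j)-J^*(\chi,u)|\le c\,(\|u_j\|_{V(\Omega)}+\|u\|_{V(\Omega)})\,\|u_j-u\|_{V(\Omega)}\to0$ rather than literally the displayed bound~\eqref{EqJ*Cont}, but this is exactly the level of precision of the paper's own conclusion and the convergence to zero is what is used.
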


\begin{proof}
	As $U^*_{ad}(\beta)$ is closed for the weak$^*$ convergence, the weak$^*$ limit $\chi$ of a sequence $(\chi_j)_{j\in \N}\subset U^*_{ad}(\beta)$ belongs to $U^*_{ad}(\beta)$.
	
	Let us denote by $u_j$ and $u$ the weak solutions of~\eqref{EqHelmChi} found for $\chi_j$ and $\chi$, respectively. 
		Thus, the difference $v_j=u_j-u$ is the weak solution of the following system:
	\begin{equation}\label{EqHelmDif}
  \left\{
    \begin{array}{ll}
        (\Delta+k^2) v_j=0  \mbox{ on }  \Omega, \\
        \frac{\partial v_j}{\partial n} = 0  \mbox{ on }    \Gamma_{Neu},\\
         v_j = 0 \mbox{ on } \Gamma_{Dir},\\
        \frac{\partial v_j}{\partial n}+\alpha\chi_j v_j = \alpha(\chi-\chi_j) u \mbox{ on } \Gamma.
    \end{array}
\right.  
\end{equation}
We apply Theorem~\ref{ThWPchi} for system~\eqref{EqHelmDif}, taking $F=0$ and  $\eta=\alpha(\chi-\chi_j) \mathrm{Tr}u$. It sufficient to notice that $\eta\in L^2(\Gamma,\mu)$, since $\alpha$ is a constant (or a continuous function), $\chi-\chi_j$ belongs to $L^\infty(\Gamma,\mu)$, and, as $u\in V(\Omega)$, as the weak solution of~\eqref{EqHelmChi} associated to $\chi$, its trace $\mathrm{Tr}u\in  L^2(\Gamma,\mu)$. 
Therefore, the result of Theorem~\ref{ThWPchi} holds:
for all $j\in \N$ there exists a unique solution $v_j\in V(\Omega)$.
Moreover, the sequence $(v_j)_{j\in \N}$ is bounded in $V(\Omega)$, $i.e.$
there exists a constant $c>0$ (independent on $j$) such that 
\begin{equation}\label{EqVkB}
	\forall j\in \N \quad \|v_j\|_{V(\Omega),\chi_j}\le c.
\end{equation}

Indeed, by estimate~\eqref{EqApriorichi} and Lemma~\ref{LemIndepChi}, there exists a constant $c>0$, uniform on $j$, such that
\begin{multline}\label{EqEstDiffu}
	\|v_j\|_{V(\Omega),\chi_j}\le c\|\alpha(\chi-\chi_j) \mathrm{Tr}u\|_{L^2(\Gamma,\mu)}\\\le c\|\alpha\|_{L^\infty(\Gamma,\mu)}\|\chi-\chi_j\|_{L^\infty(\Gamma,\mu)} \|\mathrm{Tr}u\|_{L^2(\Gamma,\mu)}.
\end{multline}
By our assumptions (see the assumptions of Theorem~\ref{ThWPchi}) $\alpha$ is continuous on $\Gamma$ and $\Gamma$ is compact, then $\|\alpha\|_{L^\infty(\Gamma,\mu)}$ is finite and does not depend on $j$.
As $\chi_j\stackrel{*}{\rightharpoonup} \chi \hbox{ in } L^\infty(\Gamma,\mu),$ then the sequence $(\chi-\chi_j)_{j\in \N}$ is bounded in $L^\infty(\Gamma,\mu)$. 
In addition $\|\mathrm{Tr}u\|_{L^2(\Gamma,\mu)}$ does not depend on $j$.
Therefore, we conclude that the sequence $(v_j)_{j\in \N}$ is bounded in $V(\Omega)$, and \eqref{EqVkB} holds.

As $V(\Omega)$ is a Hilbert space (hence reflexive), there exists a weakly convergent subsequence:
$$\exists (v_{j_i})_{i\in \N}\subset (v_j)_{j\in \N}\hbox{ and } v\in V(\Omega): \quad  v_{j_i}\rightharpoonup v \hbox{ in } V(\Omega).$$
Now, we need to show that $v=0$. Once again, we use the well-posedness of~\eqref{EqVFchi} and its Fredholm property recalled in Remark~\ref{RemOptT}. By~\eqref{EqOpT}, we have for all $j\in \N$ and all $\phi\in V(\Omega)$ that
\begin{equation}\label{EqNE}
	((Id-k^2T)v_{j_i},\phi)_{V(\Omega),\chi_{j_i}}=(\alpha(\chi-\chi_{j_i}) \mathrm{Tr}u, \mathrm{Tr}\phi)_{L^2(\Gamma,\mu)}.
\end{equation}
The operator $Id-k^2T$ is linear, continuous, and bijective on $V(\Omega)$. By its continuity, for $i\to+\infty$, $ (Id-k^2T)v_{j_i}\rightharpoonup (Id-k^2T)v$  in  $V(\Omega)$, and hence, in the limit~\eqref{EqNE} becomes
\begin{equation}\label{EqNE2}
	((Id-k^2T)v,\phi)_{V(\Omega),\chi}=0 \hbox{ for all } \phi\in V(\Omega).
\end{equation}
By the injectivity of the operator $Id-k^2T$, relation~\eqref{EqNE2} implies that $v=0$.

Let us now prove that $v_{j_i}\to 0$ in $V(\Omega)$ (strongly!). We take in~\eqref{EqNE} $\phi=v_{j_i}$ and notice that by the compactness of the trace operator $\mathrm{Tr}: V(\Omega)\to L^2(\Gamma,\mu)$, we have, as $v_{j_i}\rightharpoonup 0$ in  $V(\Omega)$, that 
	$\mathrm{Tr}v_{j_i} \to 0$   in $L^2(\Gamma,\mu).$ Therefore, 
	we find that 
	\begin{equation}\label{EqNE3}
	((Id-k^2T)v_{j_i},v_{j_i})_{V(\Omega),\chi_{j_i}}=(\alpha(\chi-\chi_{j_i}) \mathrm{Tr}u, \mathrm{Tr}v_{j_i})_{L^2(\Gamma,\mu)}\to 0 \hbox{ for } i\to+\infty.
\end{equation}
Consequently, $\left[\|v_{j_i}\|^2_{V(\Omega),\chi_{j_i}}-k^2(Tv_{j_i},v_{j_i})_{V(\Omega),\chi_{j_i}}\right]\to 0$ for $i\to+\infty$. By its definition the operator $T: V(\Omega) \to V(\Omega)$ is compact (see Remark~\ref{RemOptT}), and hence, $Tv_{j_i}\to 0$ in $V(\Omega)$, which result in $(Tv_{j_i},v_{j_i})_{V(\Omega),\chi_{j_i}}\to 0$ for $i\to +\infty$. Finally, we obtain that $\|v_{j_i}\|^2_{V(\Omega),\chi_{j_i}}\to 0$ and $v_{i_j}\rightharpoonup 0$ in  $V(\Omega)$, ensuring the strong convergence $v_{j_i} \to 0$ in~$V(\Omega)$.

Now,  any weakly convergent sequence of solutions of system~\eqref{EqHelmDif} converges to $0$, $i.e.$ $0$ is the unique accumulation (limit) point 
of the sequence $(v_j)_{j\in \N}$, which implies that the sequence $v_j\to 0$ itself in $V(\Omega)$. Actually, $v_j=u_j-u$ and hence $u_j\to u$ in $V(\Omega)$ for $j\to+\infty$.

Finally, we have obtained that from $\chi_j \stackrel{*}{\rightharpoonup} \chi \hbox{ in } L^\infty(\Gamma,\mu),$ it follows that  $\chi \in U^*_{ad}(\beta)$ and $u(f_0,\chi_j)\to u(f_0,\chi)$ in $V(\Omega)$, from where we directly have~\eqref{EqJ*Cont}, $i.e.$ the continuity of $J^*$ on $U_{ad}^*(\beta)$ for all fixed $f_0\in I$.
\end{proof}

Therefore, we proceed to the proof of the existence of optimal distributions satisfying~\eqref{EqExistMin} and~\eqref{EqExistFreqMin}, stated in Theorem~\ref{ThMainROptParam}.
	\begin{proof}
As $U^*_{ad}(\beta)$ is weakly$^*$ compact (in $L^\infty(\Gamma,\mu)$) and $J^*$ and $\hat{J}^*$ are continuous on it (by the continuity of $u(\cdot,\chi)$ and the definitions of $J^*$ and $\hat{J}^*$), then there exist  $\chi^{opt},$ $\hat{\chi}^{opt} \in U^*_{ad}(\beta)$ (and the corresponding solutions of the Helmholtz system~\eqref{EqHelmChi}) realizing the minima of $J^*$ and $\hat{J}^*$ respectively on $U^*_{ad}(\beta)$.
	In addition, $$\min_{\chi \in U^*_{ad}(\beta)} J^*(f_0,\chi)=\inf_{\chi \in U_{ad}(\beta)} J(f_0,\chi)$$ as $U^*_{ad}(\beta)$ is the closure of $U_{ad}(\beta)$ and $J^*$ takes the same values as $J$ on $U_{ad}(\beta)$ (see Theorem~\ref{ThU*}). In the same way, we conclude for $\hat{J}^*$.
\end{proof}

To find~\eqref{EqJder}, we apply the usual Lagrangian~\cite{ALLAIRE-2007} method breifly presented in Section~\ref{SecLagrange}. By the linearity of the integral over $f$ and the Fréchet derivative properties, we obtain directly~\eqref{EqDerJhat}.
\subsection{Shape derivative of the energy for a fixed frequency}\label{SecLagrange}
In this section, the frequency $f$ is supposed to be fixed, and we commit any notations with dependence on it.
We adopt the Lagrangian method~\cite{ALLAIRE-2007} for the complex value problem:
instead, to consider the variational formulation~\eqref{EqVFchi}, we first split it into the real and imaginary parts and then consider their linear combination, using the independence of the real and imaginary parts of the test functions. 
We define the notations $u=u_R+iu_I$ with $u_R$ and $u_I$ for the real and imaginary parts of $u$, respectively. We do the same for $\alpha=\alpha_R+i\alpha_I$ and $F=F_R+iF_I$, taking $\chi\in U^*_{ad}(\beta)$ and $\eta=0$ (see~\eqref{EqVFchi}).  
For instance, we subtract the imaginary part of~\eqref{EqVFchi} from the real one and obtain its real-valued analog: 
for  all $(v_R, v_I)\in V(\Omega)\times V(\Omega)$
\begin{multline*}
FV(\chi,u_R,u_I,v_R,v_I):=\\
\int_{\Gamma}\chi\left[(\Tr\alpha_{I}\Tr u_{R}+\alpha_{R}\Tr u_{I}) \Tr v_{I}-(\alpha_{R} \Tr u_{R}-\alpha_{I}\Tr u_{I})\Tr v_{R}\right]d\mu\\+\int_{\Omega}\left[\nabla u_{I}\nabla v_{I}
-\nabla u_{R} \nabla v_{R}+k^{2}(u_{R}v_{R}-u_{I}v_{I})+f_{I}v_{I}-f_{R}v_{R}\right]dx=0.
\end{multline*}
Here $u_R$ and $u_I$ depend of $\chi$.

    We now define the Lagrangian of the optimization problem as the sum of the previously calculated variational formulation and the objective function~\eqref{EqJ*}
    \begin{multline*}
    \forall \chi\in U_{ad}^*(\Gamma,\mu), \quad \forall w_R,\, w_I, \, q_R,\, q_I\in V(\Omega) \\   \mathcal{L}(\chi,w_{R},w_{I},q_{R}, q_{I})=A\int_{\Omega}(w_{R}^{2}+w_{I}^{2})dx+B\int_{\Omega}((\nabla w_{R})^{2}+(\nabla w_{I})^{2})dx\\+C\int_{\Gamma} ((\Tr w_{R})^{2}+(\Tr w_{I})^{2})d \mu
    +FV(\chi,w_R,w_I,q_R,q_I).
            \end{multline*}
        Here all arguments of the Lagrangian, $\chi$, $w_R,\, w_I, \, q_R,\, q_I$ are independent.
        Then, to be able to apply the Lagrangian method ensuring 
        \begin{equation}\label{EqDerivLagrChi}
        	\langle D_\chi J^*(\chi),h\rangle=\langle \frac{\del\mathcal{L}}{\del \chi} (\chi, u_R(\chi), u_I(\chi),p_R(\chi),p_I(\chi)), h\rangle
        \end{equation}
        in a direction $h\in L^\infty(\Gamma,\mu)$
for $p=p_R+ip_I\in V(\Omega)$ the weak solution of the adjoint variational problem, we need firstly prove the differentiability of $u$ on $\chi$:
        \begin{lemma}\label{LemDiffer}
        Let assumptions of Theorem~\ref{ThMainROptParam} hold. The mapping $\chi\mapsto u(\chi)$ is Fréchet differentiable on $U_{ad}^*(\beta)$ and the directional derivative in $\chi\in\mathcal{U}_{ad}^*(\beta)$ in the direction $h\in L^{\infty}(\Gamma,\mu)$, such that $\chi+h\in U_{ad}^*(\beta)$, is given by 
$\langle D_\chi u(\chi),h \rangle=\psi$, where $\psi\in V(\Omega)$ is
the unique 
weak solution of the following variational formulation:
\begin{multline}\label{EqVFWder}
\forall \phi\in V(\Omega) \quad	(\psi,\phi)_{V(\Omega),\chi}-k^2(\psi,\phi)_{L^2(\Omega)}+i(\mathrm{Im}(\alpha) \chi \mathrm{Tr} \psi, \mathrm{Tr}\phi)_{L^2(\Gamma,\mu)}\\
=-(\alpha h \mathrm{Tr} u(\chi),\mathrm{Tr}\phi)_{L^2(\Gamma,\mu)},
\end{multline}
formally associated with the problem
\begin{equation}\label{frechet_derivability}
\left\{
    \begin{array}{ll}
        (\Delta +k^2) \psi=0  \mbox{ in }  \Omega , \\
        \frac{\partial \psi}{\partial n} = 0  \mbox{ on }    \Gamma_{N} ,\quad
        \psi = 0 \mbox{ on } \Gamma_{D} ,\\
        \frac{\partial \psi}{\partial n}+\chi\alpha \psi = -\alpha h u(\chi) \mbox{ on } \Gamma.
    \end{array}
\right.
\end{equation}
\end{lemma}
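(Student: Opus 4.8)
The plan is to exhibit $\psi$ as the Fréchet derivative of $\chi\mapsto u(\chi)$ by controlling the remainder in a first-order expansion, using twice the uniform a priori bound of Lemma~\ref{LemIndepChi}. First I would verify that $\psi$ is well defined: the right-hand side of~\eqref{EqVFWder} corresponds to $F=0$ and $\eta=-\alpha h\,\mathrm{Tr} u(\chi)$, which lies in $L^2(\Gamma,\mu)$ because $\alpha$ is continuous on the compact $\Gamma$, $h\in L^\infty(\Gamma,\mu)$, and $\mathrm{Tr} u(\chi)\in L^2(\Gamma,\mu)$. Hence~\eqref{EqVFWder} is of the same type as~\eqref{EqVFchi}, and Theorem~\ref{ThWPchi} provides a unique $\psi\in V(\Omega)$. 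By uniqueness the map $h\mapsto\psi$ is linear, and by~\eqref{EqApriorichi} with the $\chi$-independent constant $\hat{C}^*$ it satisfies $\|\psi\|_{V(\Omega)}\le \hat{C}^*\|\alpha\|_{L^\infty(\Gamma,\mu)}\|h\|_{L^\infty(\Gamma,\mu)}\|\mathrm{Tr} u(\chi)\|_{L^2(\Gamma,\mu)}$, so it is a bounded linear operator $L^\infty(\Gamma,\mu)\to V(\Omega)$ — the correct form for a derivative.

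Next I would set $w:=u(\chi+h)-u(\chi)$ and subtract the variational identities~\eqref{EqVFchi} written for $\chi+h$ and for $\chi$. Since replacing $\chi$ by $\chi+h$ in the left-hand side only adds the term $\int_\Gamma\alpha h\,\mathrm{Tr} v\,\mathrm{Tr}\overline{\phi}\,d\mu$, and the common right-hand side cancels, one obtains for all $\phi\in V(\Omega)$
\[
(w,\phi)_{V(\Omega),\chi}-k^2(w,\phi)_{L^2(\Omega)}+i(\mathrm{Im}(\alpha)\chi\,\mathrm{Tr} w,\mathrm{Tr}\phi)_{L^2(\Gamma,\mu)}=-(\alpha h\,\mathrm{Tr} u(\chi+h),\mathrm{Tr}\phi)_{L^2(\Gamma,\mu)}.
\]
This is again problem~\eqref{EqVFchi} with parameter $\chi$ and datum $\eta=-\alpha h\,\mathrm{Tr} u(\chi+h)$, so Lemma~\ref{LemIndepChi} gives $\|w\|_{V(\Omega)}\le \hat{C}^*\|\alpha\|_{L^\infty(\Gamma,\mu)}\|h\|_{L^\infty(\Gamma,\mu)}\|\mathrm{Tr} u(\chi+h)\|_{L^2(\Gamma,\mu)}$; since $\chi+h\in U^*_{ad}(\beta)$, the norm $\|u(\chi+h)\|_{V(\Omega)}$, and hence $\|\mathrm{Tr} u(\chi+h)\|_{L^2(\Gamma,\mu)}$, is bounded uniformly, whence $\|w\|_{V(\Omega)}\le C\|h\|_{L^\infty(\Gamma,\mu)}$.

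Subtracting~\eqref{EqVFWder} from the displayed identity shows that the remainder $r:=w-\psi$ solves the same left-hand side with datum $\eta=-\alpha h\,\mathrm{Tr} w$, so one more application of Lemma~\ref{LemIndepChi} together with the continuity of the trace yields $\|w-\psi\|_{V(\Omega)}\le \hat{C}^*\|\alpha\|_{L^\infty(\Gamma,\mu)}\|h\|_{L^\infty(\Gamma,\mu)}\|\mathrm{Tr} w\|_{L^2(\Gamma,\mu)}\le C'\|h\|_{L^\infty(\Gamma,\mu)}^2$. Thus $\|u(\chi+h)-u(\chi)-\psi\|_{V(\Omega)}=O(\|h\|_{L^\infty(\Gamma,\mu)}^2)=o(\|h\|_{L^\infty(\Gamma,\mu)})$, which together with the linearity and boundedness of $h\mapsto\psi$ established above is precisely Fréchet differentiability with $\langle D_\chi u(\chi),h\rangle=\psi$; the identification of the formal strong form~\eqref{frechet_derivability} then follows from the Green formula~\eqref{EqGreenF}. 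The only genuine obstacle is that the two successive estimates must close: the bound on $w$ feeds into the bound on $w-\psi$, which is possible precisely because all constants are uniform in $\chi\in U^*_{ad}(\beta)$ (Lemma~\ref{LemIndepChi}) and $\chi+h$ remains admissible. The quadratic gain in $\|h\|_{L^\infty(\Gamma,\mu)}$ — rather than mere Lipschitz continuity — is exactly what upgrades continuity to differentiability.
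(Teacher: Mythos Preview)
Your proposal is correct and follows essentially the same approach as the paper: both arguments define $\psi$ via Theorem~\ref{ThWPchi}, derive the equation satisfied by the remainder $u(\chi+h)-u(\chi)-\psi$ (whose source term is $-\alpha h\bigl(\mathrm{Tr}u(\chi+h)-\mathrm{Tr}u(\chi)\bigr)$, identical to your $-\alpha h\,\mathrm{Tr} w$), and close with two successive applications of the $\chi$-uniform estimate from Lemma~\ref{LemIndepChi} to obtain the $O(\|h\|_{L^\infty}^2)$ bound. The only difference is cosmetic: you first isolate the equation for $w$ and then subtract, whereas the paper writes the equation for $\Psi=w-\psi$ directly.
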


\begin{proof} We follow the ideas of the proof of~\cite[Lemma 5.15]{ALLAIRE-2007}.
  
Let us fixe 
$\chi\in\mathcal{U}_{ad}^*(\beta)$. Then let us take $h\in L^{\infty}(\Gamma,\mu)$ such that $\chi+h\in\mathcal{U}_{ad}^*(\beta)$ (for such $h$ a necessarily condition  is $\int_\Gamma h d\mu=0$). 
Then for all $t\in [0,1]$  
\begin{equation}\label{EqCondchihat}
\hat{\chi}(t):= \chi+t \,h \in \mathcal{U}_{ad}^*(\beta)\quad \hbox{ and satisfies }	\quad \beta\le \| \hat{\chi}(t)\|_{L^\infty(\Gamma,\mu)}. 
\end{equation}

By Theorem~\ref{ThWPchi} problem~\eqref{frechet_derivability} is weakly well-posed: there exists a unique solution
$\psi\in V(\Omega)$ such that it holds~\eqref{EqVFWder}.
Then we consider $\hat{u}(t)=u(\hat{\chi}(t))$, $i.e.$ the weak solution of~\eqref{EqVFchi} found for $\chi:=\hat{\chi}(t)$.
In addition, there hold $\hat{u}(0)=u(\chi)$ and $\hat{u}(1)=u(\chi+h)$.
Thus, we find the variational formulation for $\Psi:=u(\chi+h)-u(\chi)-\psi$:
\begin{multline}\label{EqVFdifetw}
	\forall \phi \in V(\Omega) \quad \int_\Omega \nabla \Psi\nabla \overline{\phi} \dx -k^2 \int_\Omega \Psi \overline{\phi} \dx+ \int_\Gamma \alpha \chi \mathrm{Tr} \Psi \mathrm{Tr}\overline{\phi}d \mu\\  =-\int_\Gamma  \alpha h \left(\mathrm{Tr}u(\chi+h)- \mathrm{Tr}u(\chi)\right)\mathrm{Tr}\overline{\phi}d \mu,
\end{multline}
which by Theorem~\ref{ThWPchi} is also well-posed in $V(\Omega)$ and it holds~\eqref{EqApriorichi} with $F=0$ and\\ $\eta=-\alpha h \left(\mathrm{Tr}u(\chi+h)- \mathrm{Tr}u(\chi)\right)$. 
Hence, using~\eqref{EqApriorichi} with a constant $\hat{C}>0$ independent on $\chi$ thanks to Lemma~\ref{LemIndepChi}, we have the estimate
\begin{multline*}
\|u(\chi+h)-u(\chi)-\psi\|_{V(\Omega),\chi}
\le \hat{C}\|\alpha h \left(\mathrm{Tr}u(\chi+h)- \mathrm{Tr}u(\chi)\right)\|_{L^2(\Gamma,\mu)}
\le\\C(\alpha,\|\operatorname{Tr}\|_{\mathcal{L}(V(\Omega),L^2(\Gamma,\mu))}) \| h\|_{L^\infty(\Gamma,\mu)} \| u(\chi+h)- u(\chi)\|_{V(\Omega),\chi}. 
\end{multline*}
Thanks to~\eqref{EqEstDiffu} and then once again by Lemma~\ref{LemIndepChi}, for fixed $k$, $\alpha$, $\eta$ and $F$ (which norms contribute to the general constant), we have
\begin{multline}\label{EqContinU}
	\|u(\chi+h)-u(\chi)\|_{V(\Omega),\chi}\le C(\alpha)\|h\|_{L^\infty(\Gamma,\mu)}\|\mathrm{Tr}u(\chi)\|_{L^2(\Gamma,\mu)}\\
	\le C(\alpha,\|\operatorname{Tr}\|)\|h\|_{L^\infty(\Gamma,\mu)}\|u\|_{V(\Omega),\chi}\le C(\alpha,\|\operatorname{Tr}\|, \|\eta\|_{L^2(\Gamma,\mu)}, \|F\|_{L^2(\Omega)})\|h\|_{L^\infty(\Gamma,\mu)}, 
\end{multline}
where we also used  the continuity of the trace operator. 
Let us notice that estimate~\eqref{EqContinU} implies the continuity of $u$ on $\chi$ in the strong topology of $L^\infty(\Gamma,\mu)$, which naturally follows from the weak$^*$ continuity obtained in the proof of Lemma~\ref{LemContWeak}.

Consequently, there exists a constant $C>0$ such that $\|u(\chi+h)-u(\chi)-\psi\|_{V(\Omega)}\le C   \| h\|_{L^\infty(\Gamma,\mu)}^2,$
which means that 
$$u(\chi+h)=u(\chi)+\psi+o(h) \hbox{ with } \lim_{\|h\|_{L^\infty(\Gamma,\mu)}\to 0} \frac{\|o(h)\|_{V(\Omega)}}{\|h\|_{L^\infty(\Gamma,\mu)}}=0.$$
This is exactly the differentiability by Fréchet since the application $h\mapsto \psi$ (see~\eqref{EqVFWder} and~\eqref{EqOpT}) is linear continuous from $L^\infty(\Gamma,\mu) \to V(\Omega)$.
\end{proof}

        By the Lagrangian method, 
        the weak solution of the corresponding variational formulations,
        $\langle \frac{\del \mathcal{L}}{\del w_R}(\chi, u_R, u_I,q_R,q_I),\phi_R\rangle=0$  for all $\phi_R\in V(\Omega)$  and  $\langle \frac{\del \mathcal{L}}{\del w_I}(\chi, u_R, u_I,q_R,q_I),\phi_I\rangle$ $=0$  for all $\phi_I\in V(\Omega)$,
        is denoted by $p=p_R+i p_I\in V(\Omega)$ (see~\eqref{EqVFAdjProb}) and is called the solution of the adjoint problem.  
 
        Finally, we calculate by~\eqref{EqDerivLagrChi} the  derivative of $J$ over $\chi$, evaluated in the direction $h\in L^\infty(\Gamma,\mu)$, according to the assumptions of Lemma~\ref{LemDiffer},  and obtain~\eqref{EqJder}. 
This finishes the proof of Theorem~\ref{ThMainROptParam}.

\section{Numerical optimization}\label{SecNumR} 
We consider the approximation of problem~\eqref{EqHelmChi} by the finite volumes (or the cell-centered finite difference method with unknowns in the center of the mesh cells with the second order convergence rate) on a square domain $\Omega=]0,1[^2$ represented in  Figure~\ref{fig:geom} along with the chosen boundary conditions. We could model wave propagation in a tunnel or a room with the reflective ground and cell and a partially absorbent wall opposite the noise source. 
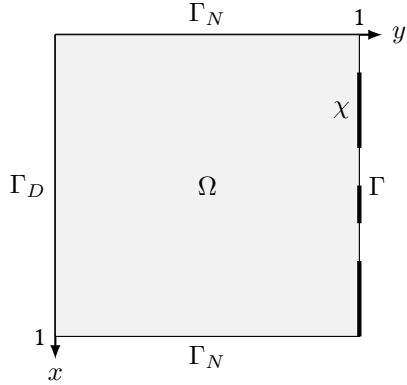
\begin{figure}[!htb]
        \centering
        \begin{tikzpicture}
            \draw [->,thick, >=latex] (0,4) -- (0,-0.3) node [below] {\(x\)};
            \draw [->,thick, >=latex] (0,4) -- (4.3,4) node [right] {\(y\)};
            \draw (0,0) -- (4,0);
            \draw (4,4) -- (4,0);
            \node [above] at (4,4) {1};
            \node [left] at (0,0) {1};
            \filldraw [fill=gray!10] (0,0) coordinate (0,0) -- (0,4) -- (4,4) -- (4,0) -- (0,0);
            \node [left] at (4,3) {\(\chi\)};
            \draw [ultra thick] (4,0) -- (4,1);
            \draw [ultra thick] (4,1.5) -- (4,2);
            \draw [ultra thick] (4,2.5) -- (4,3.5);
            \node [right] at (4,2) {\(\Gamma\)};
            \node [left] at (0,2) {\(\Gamma_D\)};
            \node [above] at (2,4) {\(\Gamma_{N}\)};
            \node [below] at (2,0) {\(\Gamma_{N}\)};
            \node at (2,2) {\(\Omega\)};
        \end{tikzpicture}        
        \caption{Geometry of the domain $\Omega$ for the numerical experiments.}\label{fig:geom}
    \end{figure}
    We set for the wave speed in the air $c=\SI{340}{m.s^{-1}}$. We perturb the system by the non-homogeneous Dirichlet boundary conditions $u|_{\Gamma_{Dir}}=g$, where $g(x)=e^\frac{(x-0.5)^2}{\sqrt{2\pi}0.5^2}$ is a centered Gaussian, and take in~\eqref{EqHelmChi} $F=\eta=0$. For the volume fraction of the absorbing material on $\Gamma$, we chose $\beta=0.5$.
    We are searching to minimize $\hat{J}$ (see~\eqref{EqTotalJ}) on the audible frequencies $I=[20Hz,1000Hz]$ for the case $J(f,\chi)=\int_\Omega |u(f,\chi)|^2 dx$, $i.e.$ taking $A=1$ and $B=C=0$ in ~\eqref{EqJ}. Then, the associated adjoint system becomes
    \begin{equation}\label{EqHelmAdj}
  \left\{
    \begin{array}{ll}
        (\Delta+k^2) p=-2\overline{u}  \mbox{ on }  \Omega; \\
        \frac{\partial p}{\partial n} = 0  \mbox{ on }    \Gamma_{Neu};\quad
         p = 0 \mbox{ on } \Gamma_{Dir};\quad
        \frac{\partial p}{\partial n}+\alpha(f)\chi p = 0\mbox{ on } \Gamma.
    \end{array}
\right.  
\end{equation}
    We use the same coefficient $\alpha(f)$, as it was found in~\cite[Appendix B]{MAGOULES-2021} for the  ISOREL porous material (see~\cite[Fig.~2]{MAGOULES-2021}).   
    In this section, we denote by $\lambda$ the wavelength of the wave. We penalize the minimum length of connected parts of $\Gamma$ with $\chi=0$, denoted by $\mathrm{minlen}(\chi)$ to be not less than $0.1$ for $\chi\in [0,1]$. This helps to avoid discretization problems in solving the Helmholtz system (the direct and the adjoint) and to take correctly into account the changes of Robin absorbing boundary condition on the homogeneous Neumann one.   For the numerical experiments, we define the spatial step of the mesh discretization $h=\Delta x=\Delta y$ equal to $\frac{\min(\lambda,\mathrm{minlen}(\chi))}{80}$ as a function of $\lambda$ and $\mathrm{minlen}(\chi)$. By our numerical tests for a fixed $\chi$, this choice of $h$ corresponds to $0.6\%$ relative $L^2$-error between an exact and the calculated solution for a fixed frequency and to $0.2\%$ for the integrated on $I$ corresponding energies, $\int_I J(f,\chi) d f$. 

    On Figure~\ref{fig:chi_50}, we present the solutions of the direct Helmholtz problem at the fixed frequency $f=\SI{200}{Hz}$ with different distributions $\chi$ on $\Gamma$. We see that the case of the full absorbent $\Gamma$ has much more red colors corresponding to a higher energy than for two other cases of $50\%$ less porous material.  
\begin{figure}[!htb]
    \centering
    \includegraphics[width=0.25\textwidth]{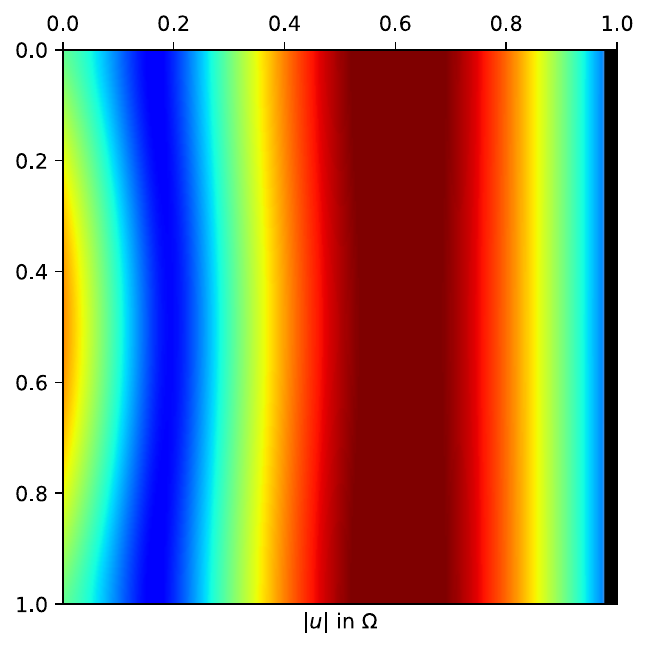}
    \includegraphics[width=0.25\textwidth]{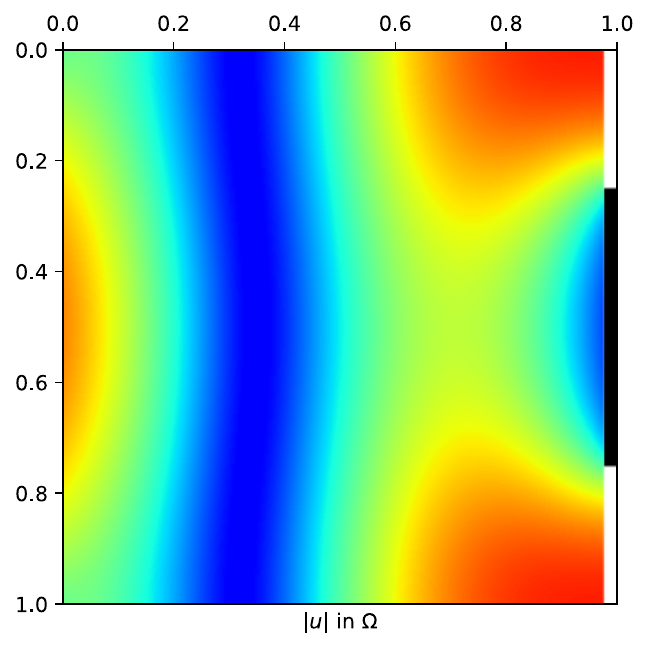}
    \includegraphics[width=0.25\textwidth]{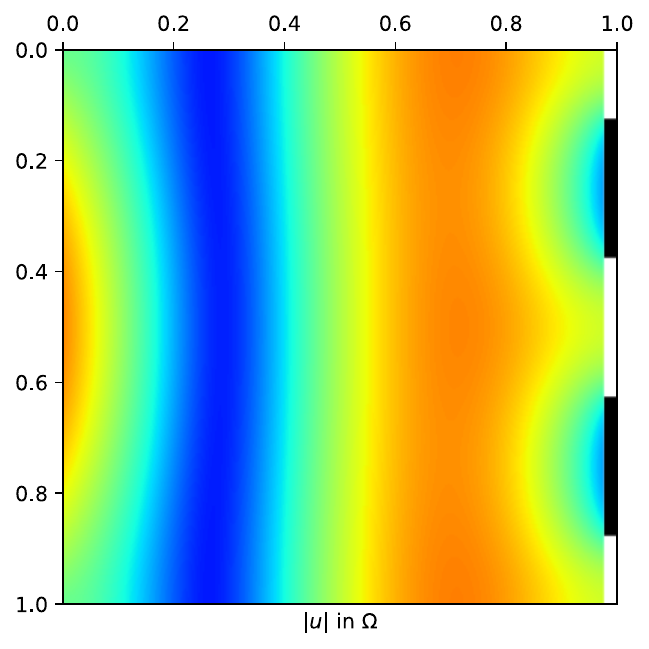}
    \caption{Examples of $|u_j|$ ($j=0,1,2$), the solutions of the Helmholtz direct problem for different distributions of $\chi_j$,  at the fixed frequency $\SI{200}{Hz}$. The color scaling is the same for three cases: strong blue is $0$, and strong red is the biggest value. On $\Gamma$, black is the absorbent material, and white is the reflective one. }\label{fig:chi_50}
    \hfill
\end{figure}

\subsection{Optimization algorithm}\label{ss:algo_gradient}
To calculate the optimal absorbent distribution $\chi$, we apply the Barzilai-Borwein gradient method with modified step size, inspired by Newton's method~\cite{BARZILAI-1988,DAI-2002}, to the relaxed problem on $U_{ad}^*(\beta)$ with $\chi\in [0,1]$ for $J^*$ or $\hat{J}^*$. For the case of the frequency range optimization of $\hat{J}^*$, we take the discretization of $I$ with $400$ uniformly spaced frequencies $f_i$, $i=1,\ldots,400$. 
The gradient descent algorithm computes a series of distributions $\chi_1, \chi_2, \ldots$ and stops when they converge. It goes from $\chi_j$ to $\chi_{j+1}$ with the following steps (formulated here for $\hat{J}^*$):
\begin{itemize}
    \item Simulation:
    \begin{itemize}
        \item for each chosen frequency $f_i$, solve the direct  and the adjoint Helmholtz problems~\eqref{EqHelmChi}, \eqref{EqHelmAdj} with parameters $h$, $\alpha(f_i)$, $k=\frac{2\pi f_i}{c}$, $\chi_j$ and $g$;
        \item to find $J^*(f_i,\chi_j)$ for all $i$ and approximate $\hat{J}^*(\chi_j)$ using the composite trapezoidal rule;
        \item compute $D_\chi{\hat{J}^*}(\chi_j)$ by~\eqref{EqDerJhat}. 
    \end{itemize}
    \item Gradient descent method: apply the Barzilai-Borwein gradient method 
    with\\ $(\gamma_j, D_\chi{\hat{J}^*}(\chi_j))$ (by $\gamma_j$ is denoted a step size of the gradient descent algorithm) to obtain the corrected gradient $\tilde{D}_\chi{\hat{J}^*}(\chi_j)$ and the new step size $\gamma_{j+1}$.
    \item Stop test: if $\|\gamma_j\tilde{D}_\chi{\hat{J}^*}(\chi_j)\| \leq \delta_1$ or $\frac{1}{m+1}\sum_{i=j-m}^{j}|\frac{\hat{J}^*(\chi_i)-\hat{J}^*(\chi_{i-1})}{\hat{J}^*(\chi_{i-1})}| \leq \delta_2$ for some fixed small strictly positive constants $\delta_1$, $\delta_2$ and a natural $m$, end the algorithm.
    \item New distribution: $\chi_{j+1} = \mathcal{P}(\chi_j - \gamma_j\tilde{D}_\chi{\hat{J}^*}(\chi_j))$, where $\mathcal{P}: (\Gamma \to \mathbb{R}) \longrightarrow{} (\Gamma \to [0,1])$ is the projection on $U^*_{ad}(\beta)$ with the condition 
    \begin{equation}\label{EqCondb}
    	\frac{1}{\mu(\Gamma)}\int_\Gamma\mathcal{P}(\chi) d\mu =\beta, \quad \beta \in ]0,1[.
    \end{equation}
\end{itemize}
For the projection on $U^*_{ad}(0.5)$ we use the following sigmoid projection 
$\mathcal{P}$: $\chi\mapsto \Phi\circ (\chi+\ell_\chi)$ with $\ell_\chi$ ensuring~\eqref{EqCondb} with $\beta=0.5$ ($\mu(\Gamma)=1$), and 
$\Phi(x)=\frac{1}{1+\exp(-8 (x-0.5))}$. 

Once the algorithm is stopped, let $\chi_n$ be the absorbent distribution at the end of the loop. Thus,  $\chi_n\in U^*_{ad}(0.5)$ is the optimal distribution, denoted by $\chi^{opt}$, taking the values in $[0,1]$, which realize a local (ideally global) minimum of the energy functional $\hat{J}^*$.

Finally, we project it on $U_{ad}(0.5)$ to obtain a characteristic function: 
\[\chi^{opt}_{projected}=\mathcal{P}_f(\chi^{opt})\in U_{ad}(0.5).\] 
We define $\mathcal{P}_f$ in the same way as previously $\mathcal{P}$, taking this time instead of $\Phi$ the function $$\Phi_f(x)=\left\{\begin{array}{cl}
    0 & \text{if } x \leq 0.5, \\
    1 & \text{if } x > 0.5.
    \end{array}
    \right.$$ This projection is equivalent to sorting all the values of $\chi$ and setting the largest ones to $1$ and the rest to $0$ so as to reach the desired volume $\int\limits_\Gamma\chi \,d\mu$.

\begin{remark}
	The choice of the sigmoid projection $\mathcal{P}$ makes the final projection of the optimal $\chi^{opt}\in U_{ad}^*(0.5)$ on $U_{ad}(0.5)$, with the only two possible values $0$ and $1$, less brutal as, for instance, for the rectified linear projection (see Figure~\ref{FigProjDif}), and thus somewhere less destroys the found optimal absorbing performances of $\chi^{opt}$.
\end{remark}
 \begin{figure}[!htb]
    \centering
     \begin{tikzpicture}
    \begin{axis}[
        width = 5cm,
        height = 3.5cm,
        domain     = -0.5:1.5,
        ymin       = 0,
        ymax       = 1.2,
        samples    = 70,
        axis lines = middle,
        grid,
        xtick = {0,.5,1},
        xticklabels = {$0$, $\frac 12$, $1$},
        ytick = {0,.5,1},
        yticklabels = {$0$, $\frac 12$, $1$},
        yticklabel style={above, left}
        ]
    \addplot [mark = none, color = red, line width=1.5]
        {min(max(x,0),1)};
    \end{axis}
    \end{tikzpicture} $ \; \; $
    \begin{tikzpicture}
    \begin{axis}[
        width = 5cm,
        height = 3.5cm,
        domain     = -0.5:1.5,
        ymin       = 0,
        ymax       = 1.2,
        samples    = 70,
        axis lines = middle,
        grid,
        xtick = {0,.5,1},
        xticklabels = {$0$, $\frac 12$, $1$},
        ytick = {0,.5,1},
        yticklabels = {$0$, $\frac 12$, $1$},
        yticklabel style={above, left}
        ]
    \addplot [mark = none, color = red, line width=1.5]
        {1/(1+exp(-8*(x-0.5))};
    \end{axis}
    \end{tikzpicture}
    \caption{Rectified linear projection function on the right and sigmoid projection $\Phi$ on the left.}\label{FigProjDif}
    \end{figure}
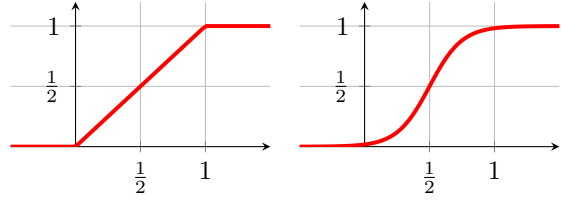

 To start the optimization algorithm, we fix the initial distribution $\chi_0$, corresponding to the defined $\beta=0.5$, presented as a function of the natural parameter of $\Gamma$ on Fig.~\ref{fig:chi_0} (on the left). The corresponding solution (its real and imaginary parts) for $f=\SI{500}{Hz}$ is given on the right-hand part of Fig.~\ref{fig:chi_0}.
 \begin{figure}[!htb]
     \centering
     \includegraphics[width=0.45\textwidth]{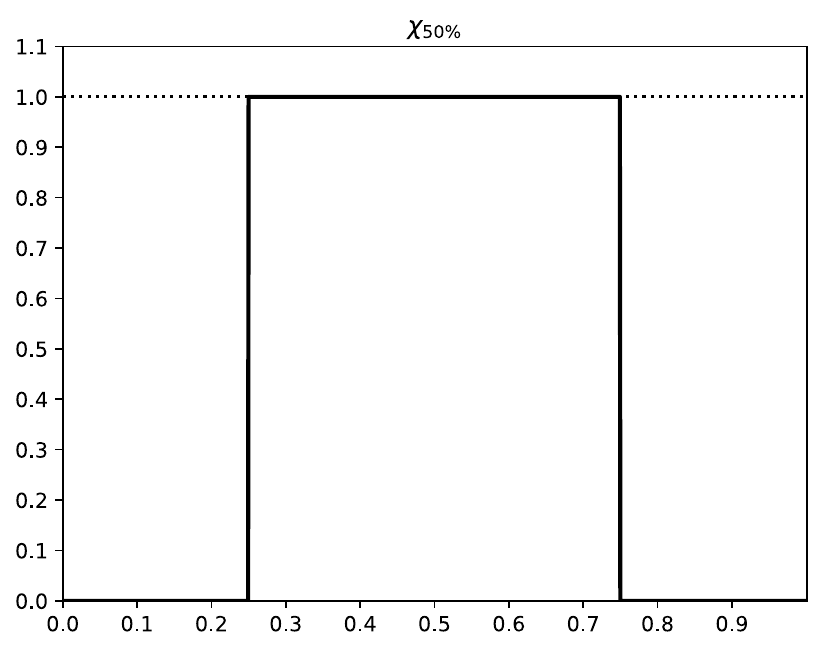}
     \includegraphics[width=0.5\textwidth]{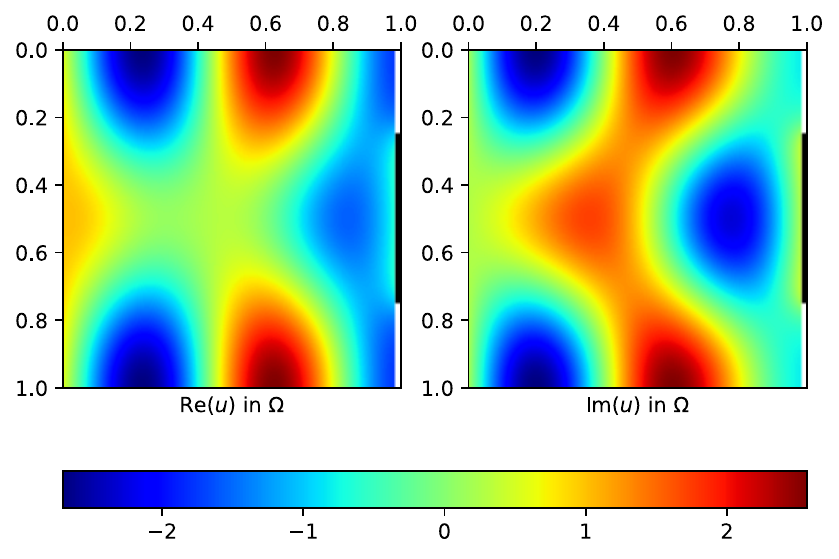}
     \caption{The initial absorbent distribution $\chi_0(y)$, $y\in [0,1]$, on $\Gamma$ on the left, and the corresponding real and imaginary parts of the solution of the Helmholtz problem found for the frequency $f=\SI{500}{Hz}$ on the right. A black line gives the presence of the porous medium on $\Gamma$.}
     \label{fig:chi_0}
 \end{figure}
 Therefore, to have a reference point for the energy optimization, we plot on Figure~\ref{fig:initial_energy_spectrum} the corresponding energy $J(\chi_0,f)$ for the initial distribution (blue line) 
 and compare it to the energy of the fully absorbent $\Gamma$ ($\chi(x)=1$ for all $x\in \Gamma$), presented by the black line.
 The main goal is to find a distribution $\chi\in U_{ad}(0.5)$ such that the energy $J(\chi)$ on the interval of chosen frequencies would be not worse or even smaller than the energy corresponding to the fully absorbent $\Gamma$.
 \begin{figure}[!htb]
   \centering
   \includegraphics[width=0.7\textwidth]{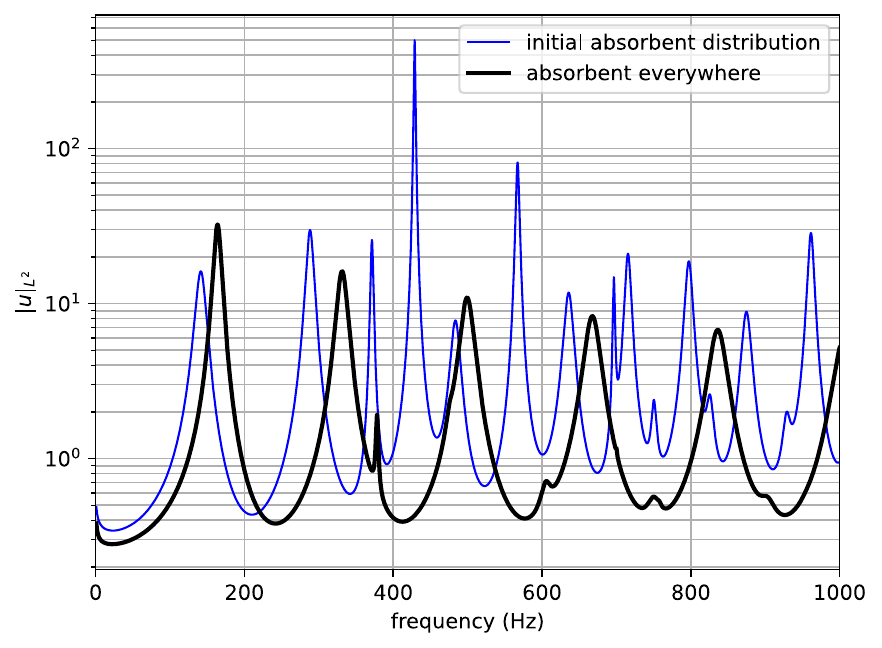}
    \caption{Energies on the frequency range for the initial absorbent distribution $\chi_0$ and for the fully absorbent $\Gamma$.}
     \label{fig:initial_energy_spectrum}
 \end{figure}
First, we consider the optimization for only one fixed frequency and study the dependence of the corresponding energy on $I$. Secondly, we consider a frequency discretization of the interval $I$ and study the minimization of $\hat{J}$ and $\hat{J}^*$.
 \subsection{Optimization on a single frequency}\label{SubSecOptNumSF}
We chose three typical frequencies: $f=\SI{100}{Hz}$ in the low frequencies, $f=\SI{500}{Hz}$ in the mean frequencies and $f=\SI{1000}{Hz}$ as a high frequecy.
Each time, the optimization procedure is started with $\chi_0$ and $\beta=0.5$, as explained previously.
Numerical results are presented on Figure~\ref{FigCompOneF}. We notice that for the middle $\SI{500}{Hz}$ and high $\SI{1000}{Hz}$ frequencies, the optimal shapes $\chi^{opt}$ from $U_{ad}^*(0.5)$ have a very similar form to a characteristic function, but not for the low frequency $\SI{100}{Hz}$. Therefore, by the continuity property of the energy on $\chi$, we do not see significant changes in the values of the energy on $I$ for $\chi^{opt}$ and $\chi^{opt}_{projected}$ obtained for  $f=\SI{500}{Hz}$ or $\SI{1000}{Hz}$. However, we see them in the case of $f=\SI{100}{Hz}$.
 \begin{figure}[!htb]
\centering
    \includegraphics[width=0.325\textwidth]{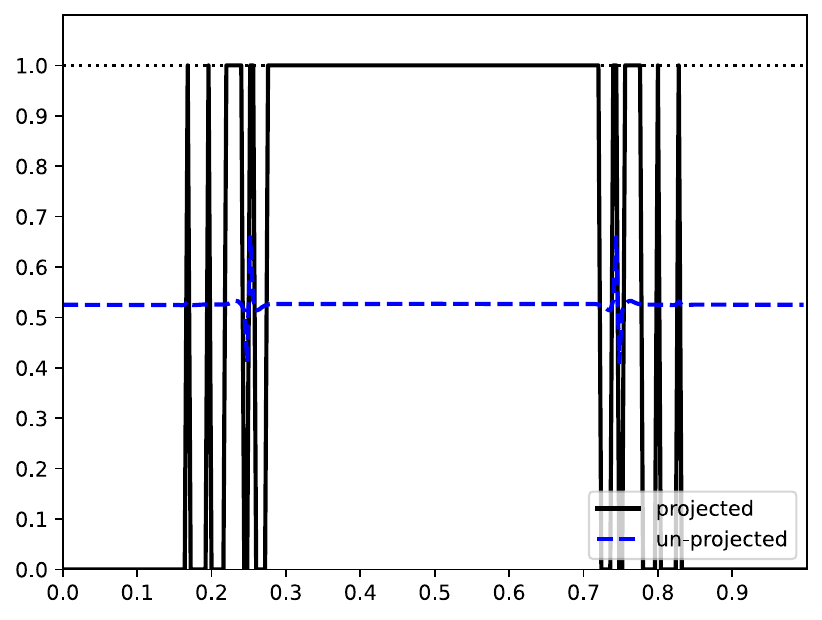}
         \includegraphics[width=0.325\textwidth]{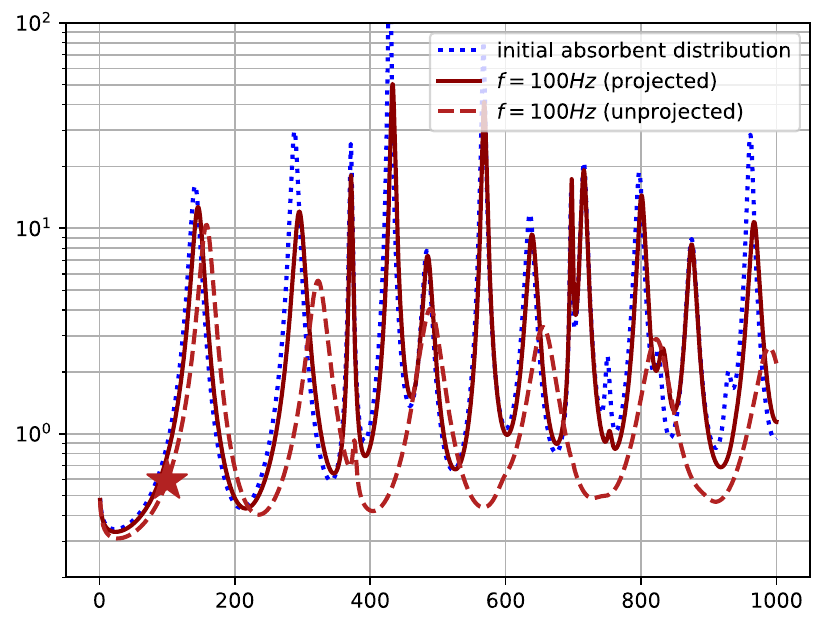}
     \includegraphics[width=0.325\textwidth]{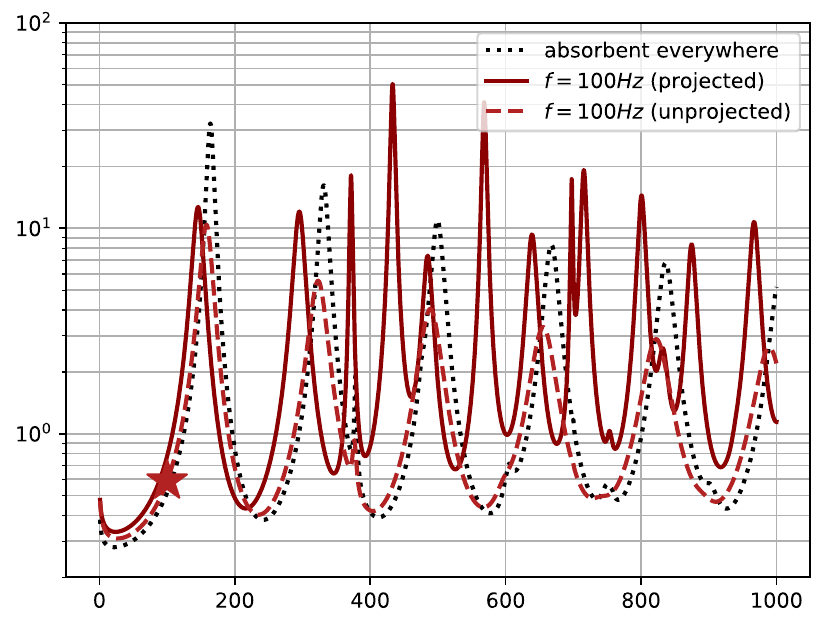}\\
     \includegraphics[width=0.325\textwidth]{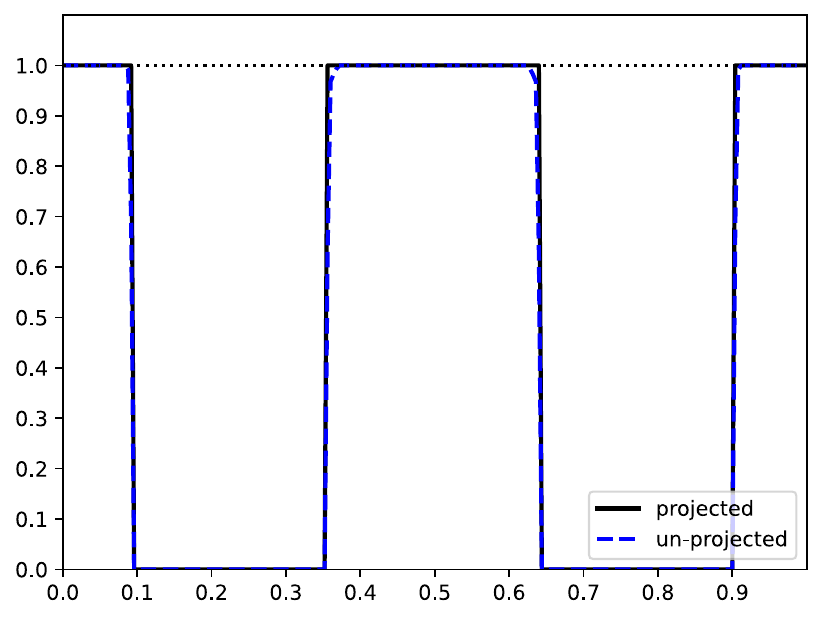}
     \includegraphics[width=0.325\textwidth]{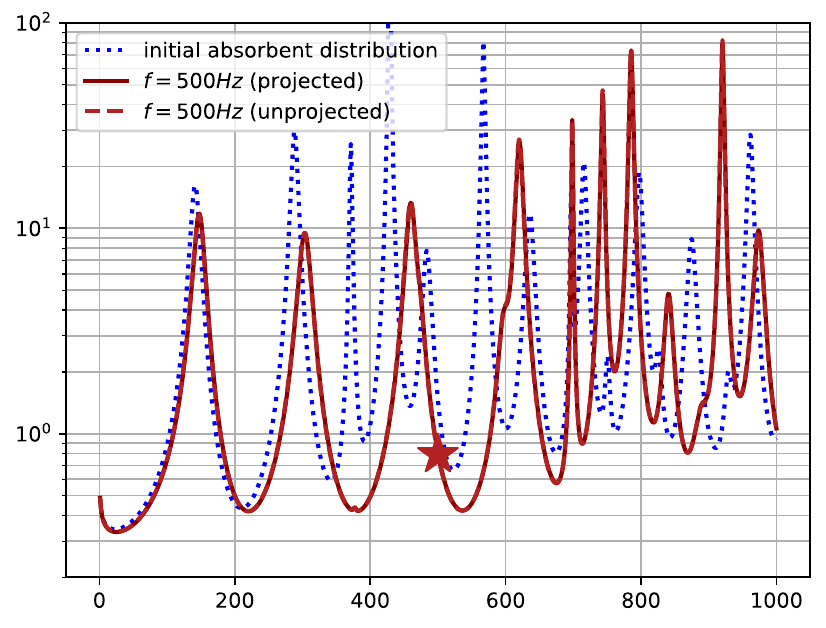}
     \includegraphics[width=0.325\textwidth]{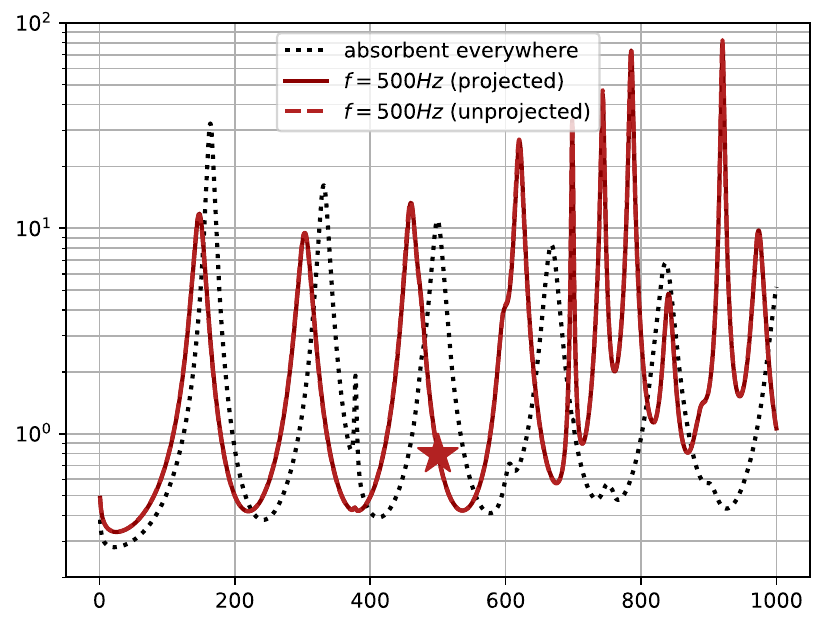}\\
     \includegraphics[width=0.325\textwidth]{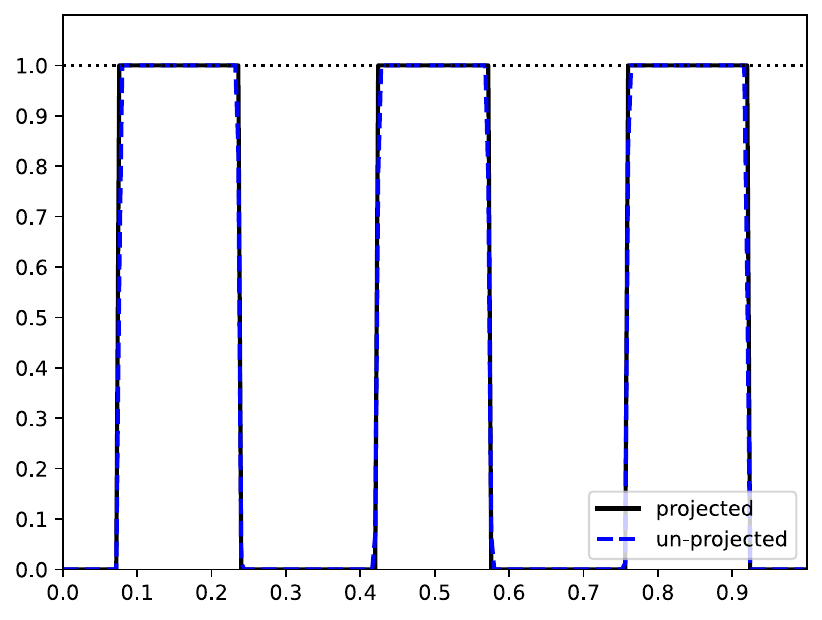}
     \includegraphics[width=0.325\textwidth]{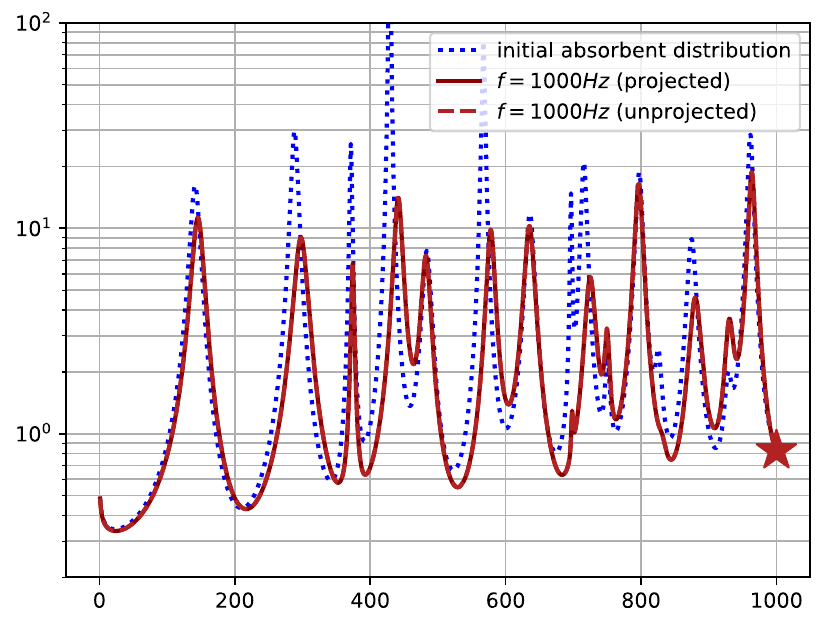}
     \includegraphics[width=0.325\textwidth]{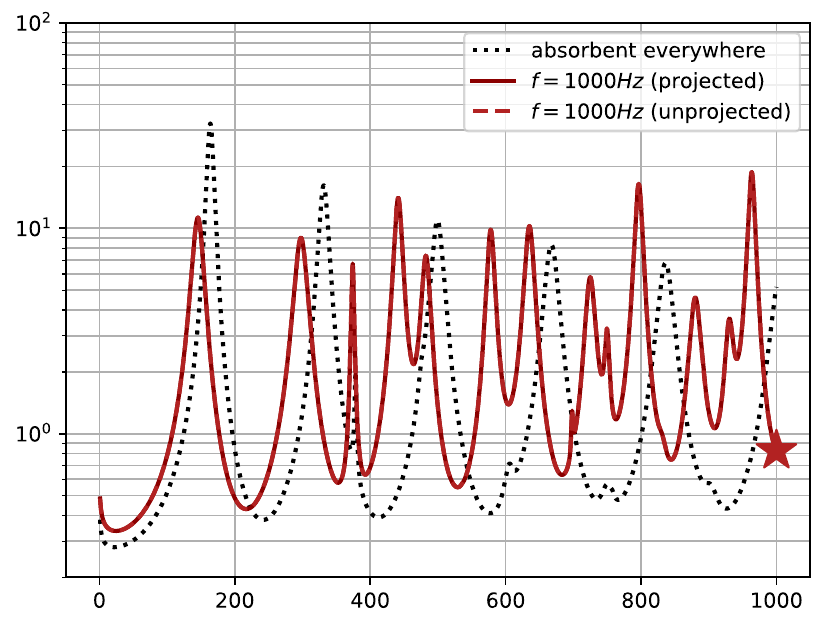}
    \caption{Optimization results for three frequencies (from the top line to the bottom line) for   $\SI{100}{Hz}$, $\SI{500}{Hz}$ and $\SI{1000}{Hz}$ respectively. From the left to the right: $\chi^{opt}\in U_{ad}^*(0.5)$ (blue dotted line) and $\chi^{opt}_{projected}\in  U_{ad}(0.5)$ (black solid line) as the functions of the natural parametrization of $\Gamma$ on the left; comparison between the energy for the obtained $\chi^{opt}$ and $\chi^{opt}_{projected}$ with the initial $\chi_0$ on the middle, and with the fully absorbing case $\chi_{100\%}$ on the right. A star indicates the frequency of the optimization.}\label{FigCompOneF}
    \end{figure}
    In the mean, by Figure~\ref{FigCompOneF}, the energy found for the optimized $\chi$ for the frequency $f=\SI{1000}{Hz}$ with $50\%$ of porous material is not too more significant than the energy of the fully absorbing case.

 However, without a surprise, optimizing one frequency results in the displacement of the peaks without significantly reducing the total integrated energy. Thus, there is a need to optimize on several frequencies at once.
 \subsection{Optimization on a frequency range}\label{SubSecOptNumRF}
 Keeping without changes previously defined numerical parameters, this time we take $\SI{400}{}$ uniformly spaced frequencies on $I$. %
 Then, we optimize the acoustical energy integrated over $I$ (the composite trapezoidal rule approximates the integral, see Subsection~\ref{ss:algo_gradient}). 

To see if the implemented gradient descent method gives satisfying performances, we also implemented a  genetic algorithm, based on the covariance matrix adaptation evolution strategy (CMA-ES)~\cite{AHAMED-2013}, and compared the optimality properties of two obtained distributions of $\chi$   given on Figure~\ref{FigGlobalOpt}. 
\begin{figure}[!htb]
    \centering
       \includegraphics[width=0.49\textwidth]{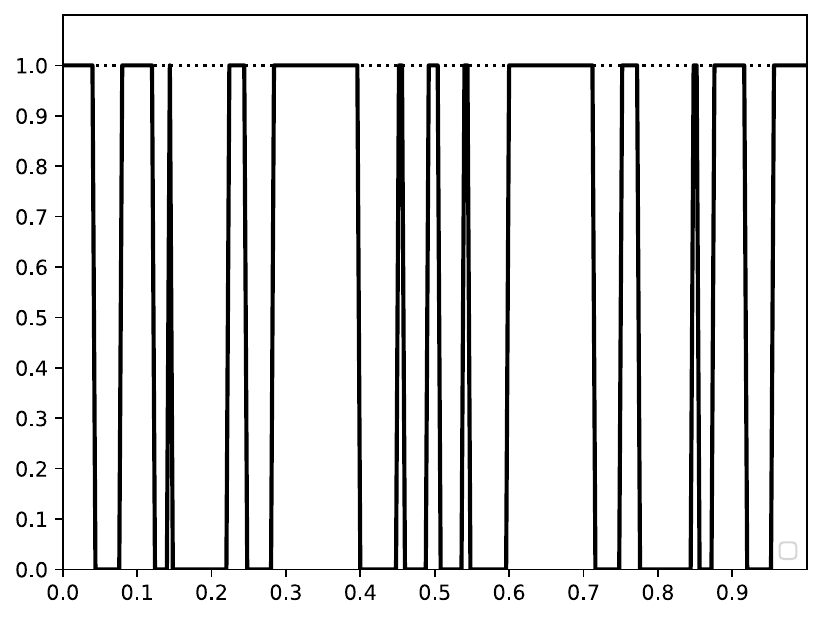}
        \includegraphics[width=0.49\textwidth]{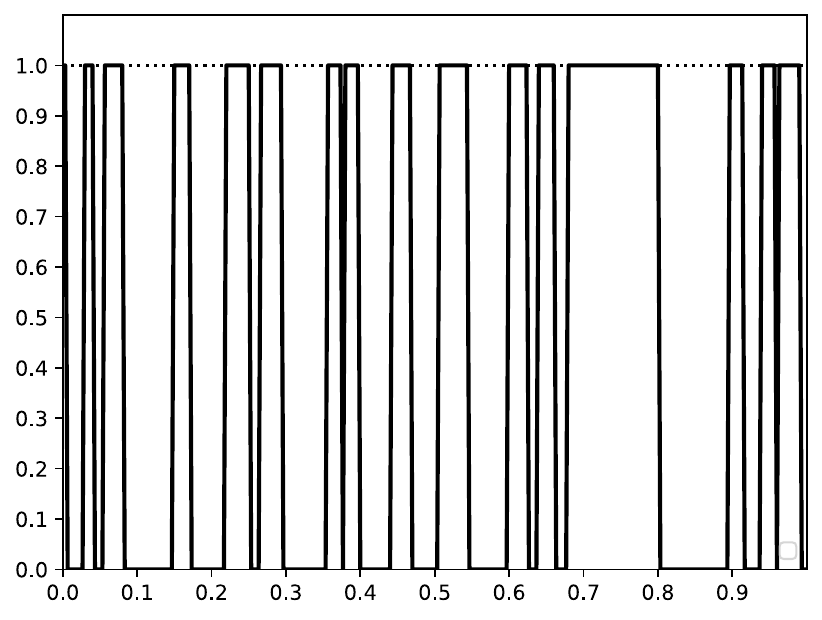}
    \caption{Optimal absorbent distribution $\chi^{opt}_{projected}$ as the function of the natural parametrization of $\Gamma$, found by gradient descent method on the left and genetic algorithm on the right for the frequency range optimization with $50\%$ of porous material.}\label{FigGlobalOpt}
    \end{figure}
    \begin{figure}[!htb]
\centering
    \includegraphics[width=0.7\textwidth]{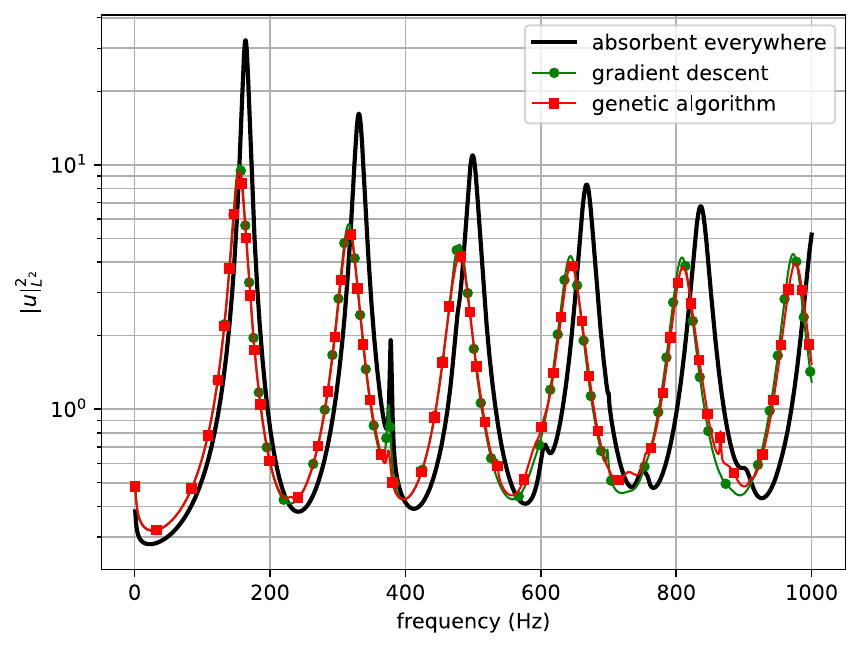}
    \caption{\label{fig:comparaison_gen} 
    Energies on the frequency range $I$ for the optimal absorbent distribution are found using the gradient descent (green line with circles) and using the genetic algorithm (red line with squares) to compare to the energy corresponding to the fully absorbing boundary $\Gamma$ (black line). }
    \end{figure}
   Figure~\ref{fig:comparaison_gen} shows that the energies for the optimal shapes given on Figure~\ref{FigGlobalOpt}, obtained by different methods, are almost the same and both better than the case of $100\%$ absorbing boundary.
   We give now the values of  integrals over $I$ for all energies presented on Fig.~\ref{fig:comparaison_gen}:
   \begin{center}
    \begin{tabular}{lc}
    \toprule
    Distribution & Total energy $\hat{J}$\\
    \midrule
    Absorbent everywhere & 2093 \\
   Gradient descent & 1475 \\
    Genetic algorithm & 1440 \\
    \bottomrule
    \end{tabular}
\end{center}
\bigskip
We found approximately the same total energy $\hat{J}$ for two distributions presented on Figure~\ref{FigGlobalOpt}.
In particular, Figure~\ref{fig:comparaison_gen} shows that with $50\%$ of absorbing material, it is possible to create a distribution of porous material that absorbs more efficiently ($\approx 31\%$ better)  the acoustical energy than with $100\%$ of absorbing material.

%
%
%

\section*{Acknowledgment}
The authors thank Mathieu Boschat for his preliminary work, which was done under the joint supervision of A. Rozanova-Pierrat and F. Magoules, on the subject during his studies at CentraleSupélec. The authors are very grateful to Pascal Omnes for the discussions and fruitful ideas related to finite volume discretization and Eric Savin for his enthusiasm and interest in the solved problem.
The general physical interest in problems with fewer materials and better absorbing performances was initially pointed out by Bernard Sapoval during his collaboration with A. Rozanova-Pierrat.
A. Rozanova-Pierrat thanks Michael Hinz and Alexander Teplayev for collaborating on the non-Lipschitz functional analysis topics.
\bibliographystyle{siamplain}
\label{bib:sec}
\bibliography{/home/anna/Documents/Statii/bibtex/biblio.bib}

\end{document}